\documentclass[11pt]{article}
\usepackage{makeidx}
\usepackage{amssymb}
\usepackage{amsfonts}
\usepackage{amsmath}
\usepackage{amsthm}
\usepackage[utf8]{inputenc}
\usepackage{geometry}
\usepackage{lipsum}
\usepackage{nicefrac}
\usepackage{graphicx}
\usepackage{epstopdf}
\usepackage[dvipsnames]{color}
\usepackage{url}
\geometry{a4paper,textwidth=14.5cm,textheight=22cm}
\usepackage{algpseudocode}
\usepackage{algorithm}
\usepackage{cleveref}

\newtheorem{theorem}{Theorem}
\newtheorem{proposition}[theorem]{Proposition}

\theoremstyle{definition}

\setlength{\parindent}{0pt}
\setlength{\parskip}{4pt}

\newcommand{\A}{\mathcal{A}}

\newcommand{\balpha}{\bar{\alpha}}

\newcommand{\bbeta}{b_{\beta}}

\newcommand{\bones}{\mathbf{1}}
\newcommand{\bzeros}{\mathbf{0}}

\newcommand{\br}{\bar{r}}

\newcommand{\bz}{\bar{z}}

\newcommand{\covM}{C_{\eta}}
\newcommand{\dg}{\mbox{diag}}

\newcommand{\eps}{\varepsilon}

\newcommand{\K}{\mathcal{K}}
\newcommand{\Krym}{\K_m(\widetilde{A},\widetilde{r}_0)}

\newcommand{\mmax}{m_{\max}}
\newcommand{\mmaxin}{\mmax^{\text{in}}}
\newcommand{\mmaxout}{k_{\max}^{\text{out}}}

\newcommand{\Norm}{\mbox{Normal}}

\newcommand{\Poiss}{\mbox{Poisson}}
\newcommand{\R}{\mathbb{R}}

\newcommand{\spn}{\mbox{span}}
\newcommand{\tql}{\textquotedblleft}
\newcommand{\tqr}{\textquotedblright}

\newcommand{\weps}{\widetilde{\varepsilon}}

\newcommand{\xex}{x^{ex}}

\title{Fast nonnegative least squares\\through flexible Krylov subspaces\thanks{This work was supported by the UK Engineering and Physical Sciences Research Council (EPSRC, grants EP/M019306/1 and EP/M008843/1)}}

\author{Silvia Gazzola
\thanks{Institute of Sensors, Signals and Systems. School of Engineering and Physical Sciences. Heriot-Watt University, Edinburgh, UK. (\texttt{S.Gazzola, Y.Wiaux@hw.ac.uk}).}
\and Yves Wiaux\footnotemark[2]}

\begin{document}
\maketitle

\begin{abstract}
Constrained least squares problems arise in a variety of applications, and many iterative methods are already available to compute their solutions. This paper proposes a new efficient approach to solve nonnegative linear least squares problems. The associated KKT conditions are leveraged to form an adaptively preconditioned linear system, which is then solved by a flexible Krylov subspace method. The new method can be easily applied to image reconstruction problems affected by both Gaussian and Poisson noise, where the components of the solution represent nonnegative intensities. {Theoretical insight is given, and} numerical experiments and comparisons are displayed in order to validate the new method, which delivers results of equal or better quality than many state-of-the-art methods for nonnegative least squares solvers, with a significant speedup.
\end{abstract}

\section{Introduction}

Let us consider the constrained linear least squares problem
\begin{equation}\label{NNLS}
\min_{x\geq0}\Phi(x)\,,\quad\Phi(x):=\left\Vert b-Ax\right\Vert_2^2\,,
\end{equation}
where $A\in\R^{M\times N}$, and the constraint $x\geq0$ on $x\in\R^N$ is intended component-wise. This problem typically arises in imaging applications, where the entries (pixels) of $x$ represent light intensities, which are nonnegative. Common examples are 2D image deblurring and reconstruction problems. When dealing with the former, one seeks to restore a blurred image. The 
blur is assumed to be known, and it depends on the application; for instance, when dealing with astronomical images acquired by 
ground-based telescopes, the blur is caused by atmospheric turbulence. When dealing with image reconstruction problems, the goal is to compute the image of an object given a set of projections {thereof}. Since the vector $b\in\R^M$ in (\ref{NNLS}) represents collected measurements, it is usually affected by (unknown) random noise $\eta$, i.e., $b=b^{ex}+\eta$. Image {deblurring and} reconstruction problems are also ill-posed, and some regularization should be considered in order to compute a meaningful approximation of the solution $x^{ex}$ of the consistent exact system
\begin{equation}\label{exact}
Ax^{ex}=b^{ex},
\end{equation} 
cf. \cite{PCH10}. The nonnegativity constraints in (\ref{NNLS}) can be considered as a form of regularization, since information about the problem is incorporated into the model. Nonnegatively constrained least squares also arise naturally in the class of Fourier imaging applications, where the measurements are acquired in the Fourier domain. Key examples are magnetic resonance imaging in medicine \cite{cMRI}, and radio-interferometric imaging in astronomy \cite{SARA}, which {typically} require the solution of under-determined linear systems. Another strong information to enforce into imaging applications is sparsity, i.e., often only a small number of pixels (or, more in general, expansion coefficients with respect to a sparsity basis) are nonzero. The authors of \cite{Elad} prove that, if the measurement matrix $A$ in (\ref{exact})  satisfies some assumptions and if $\xex$ is sufficiently sparse, then requiring nonnegativity is equivalent to requiring sparsity (e.g., minimizing the $\ell_1$ norm of $x$). 
Because of the multi-dimensional nature of imaging problems, the quantities in (\ref{NNLS}) are typically large-scale. In particular, with the advent of next-generation radio telescopes such as the Square Kilometre Array, solvers for (\ref{NNLS}) must be able to handle data sizes of the order of Terabytes or more{: this implies that the employed algorithms should be scalable and extremely efficient, with an overall low computational cost, usually measured as total number of matrix-vector products.} 

In developing our theory, and in our experiments, 
{problems affected by Gaussian white noise are mainly taken into account. Some possible extensions to include problems affected by both Gaussian and Poisson noise are outlined}. The latter is a realistic model when dealing with astronomical images acquired by charge-coupled-devices (cf. \cite{BN06}, and the references therein). Following the derivations in \cite{BN06}, the generic noise model is 
given by
\begin{equation}\label{NoiseModel}
b = \Poiss(A\xex)+\Poiss(\beta\bones)+\Norm(\bzeros,\sigma^2I)\,,
\end{equation}
where $\bones=[1,\dots,1]^T\in\R^{M}$, {$\bzeros=[0,\dots,0]^T\in\R^{M}$}, and $I$ is the identity matrix of order $M$. After some statistical approximations, the noisy problem associated with (\ref{exact}) can be expressed as
\begin{equation}\label{GaussPoissPb}
\underbrace{b-\beta\bones}_{=:\,\bbeta}=A\xex+\Norm(\bzeros,\underbrace{\dg(A\xex+\beta\bones+\sigma^2\bones)}
_{=:\,\covM})\,,
\end{equation}
where the terms $A\xex$ and $\beta\bones$ in the above random variables originate from the Poisson terms on the right-hand side of (\ref{NoiseModel}).
The entries of $A$ are assumed to be nonnegative (this is typically true for imaging problems), so that the diagonal elements of $\covM$ in (\ref{GaussPoissPb}) are positive.
In particular, when only Gaussian noise is involved, equation (\ref{GaussPoissPb}) reduces to
\begin{equation}\label{GaussPb}
b=A\xex+\Norm(\bzeros,\sigma^2I)\,,
\end{equation}
and the associated nonnegative problem is (\ref{NNLS}). In general, the nonnegative least squares problem associated with the formulation (\ref{GaussPoissPb}) reads as
\begin{equation}\label{covNNLS}
\min_{x\geq 0}\Phi_{C}(x),\quad\Phi_{C}(x):=\|\covM^{-1/2}(\bbeta-Ax)\|_2^2\,,
\end{equation}
where 
a power of the covariance matrix is basically incorporated as left preconditioner for (\ref{NNLS}). Note that, since $\covM$ is a function of $A\xex$, one should consider an approximation (cf. again \cite{BN06}).

\subsection{{Related works}} 
\label{sect:survey}
Over the last decades, many methods have been derived to iteratively solve problem (\ref{NNLS}). 
The most basic ones are the so-called {gradient} projection methods that, at each iteration, combine a descent step in the direction of the negative gradient of $\Phi(x)$ 
with a projection onto the nonnegative orthant. Some examples include the projected Landweber (or Richardson) method, the projected Cimmino method, and the projected Steepest Descent method {(NNSD)}: these methods differ in the way the step-size is set, and in the possible use of fixed preconditioners (cf. \cite{BerNagy13} and the references therein). In the optimization literature, these methods {can be incorporated into the framework of the so-called} \tql iterative shrinkage thresholding algorithms\tqr\ (ISTA), {and a well-known accelerated version of them, dubbed \tql FISTA\tqr\ \cite{FISTA}, will be considered in the following sections.} 

Another class of methods for the solution of nonnegative least squares stems from the enforcement of the Karush-Kuhn-Tucker (KKT) conditions.
For problem (\ref{NNLS}), the KKT conditions are necessary and sufficient for optimality, and they can be compactly expressed as
\begin{equation}\label{KKT}
X A^T(A x-b)=0\,,\quad\mbox{where}\quad X=\dg(x)\,,\quad x\geq 0\,,\quad{A^T(A x-b)\geq0\,,}
\end{equation}
cf. \cite[\S 6.8]{LS} and \cite{NS00}. 
The authors of \cite{HNV00, NS00} remark that 
{the conditions in} (\ref{KKT}) can be also obtained by reformulating (\ref{NNLS}) as a convex unconstrained least squares problem with the component-wise re-parametrization $x=e^z$, and by imposing the stationarity condition on the gradient of the objective function computed by the chain rule: this is immediate from the fact that 
\begin{equation}\label{ChainRule}
\nabla_z\Phi(x)=\dg(x)\nabla_x\Phi(x)=XA^T(A x-b)\,.
\end{equation}
However, this is not the point of view adopted in this paper. 
It should be underlined that {the first condition in} (\ref{KKT}) is a nonlinear system of equations with respect to $x$, and different approaches for its solution are already available in the literature, cf. \cite{BN06, Kaufman, NS00}. All of them can be expressed as fixed-point iterations of the form 
\begin{equation}\label{MRNSD}
x_{m}=x_{m-1}+\alpha_{m-1} \underbrace{X^{(m)}A^T(b-A x_{m-1})}_{=:\,d_{m-1}}\,,\quad\mbox{where}\quad 
X^{(m)}=\dg(x_{m-1})\,.
\end{equation}
Note that the vector $d_{m-1}$ is the negative gradient $-\nabla_z\Phi(x)$ computed in $x_{m-1}$, so that these methods still descend along the direction of the gradient. However, with respect to the usual gradient descent methods applied to solve unconstrained least squares problems, the step length $\alpha_{m-1}$ in the above equation should be somewhat bounded in order to impose 
nonnegativity of the approximate solution at each iteration. For this reason, in \cite{BN06, NS00} these methods are named Modified Residual Norm Steepest Descent (MRNSD) algorithms. 

When considering problems affected by both Gaussian and Poisson noise, a scheme similar to (\ref{MRNSD}) can be applied to solve problem (\ref{covNNLS}). Namely, 
the iterates are updated as
\begin{equation}\label{covMRNSD}
x_{m}=x_{m-1}+\alpha_{m-1} X^{(m)}A^T\covM^{-1}(b_{\beta}-A x_{m-1})\,,\quad\mbox{with}\quad 
X^{(m)}=\dg(x_{m-1})\,,
\end{equation}
and different methods originate from different approximations of the diagonal covariance matrix $\covM$. The authors of \cite{BN06} outline two strategies: the first one consists {of} taking 
\begin{equation}\label{covMfix}
\covM = \dg(b+\sigma^2\bones)\,,
\end{equation}
and the corresponding method (\ref{covMRNSD}) is called weighted MRNSD (WMRNSD) algorithm; the second one considers a step-dependent $\covM$, defined as
\begin{equation}\label{covMvar}
\covM^{(m)}=\dg(Ax_{m-1}+\beta\bones+\sigma^2\bones)\,,
\end{equation}
and the corresponding method (\ref{covMRNSD}) is called $k$-weighted MRNSD (KWMRNSD). 
Many numerical experiments available in the literature show that the class of the MRNSD methods is efficient and reliable. 
However, as usual when considering gradient descent methods, the rate of convergence is 
quite slow. The authors of \cite{BN06, NS00} use some (additional) specific left preconditioners $L$ to accelerate the convergence of the MRNSD methods {(PMRNSD)}, so that $A$ in (\ref{MRNSD}) and (\ref{covMRNSD}) is replaced by the matrix $L^{-1}A$.

Krylov subspace methods are well-known iterative solvers that are commonly employed to regularize unconstrained least squares problems of the form 
\begin{equation}\label{LS}
\min_{x\in\R^N}\Phi(x)\,,
\end{equation} 
where $\Phi(x)$ is defined as in (\ref{NNLS}), or the normal equations
\begin{equation}\label{NE}
A^TA x = A^Tb
\end{equation}
associated with it, cf. \cite{Survey15, PCH10}. To keep the derivations simpler, only problems affected by Gaussian noise are considered at this stage.
At the $m$th iteration of a projection method, an approximation $x_m$ of a solution of (\ref{LS}) is computed by imposing 
$x_m$ to belong to an $m$-dimensional approximation subspace $\mathcal{A}_m$, and additional constraints on the corresponding residual $r_m:=b-A x_m$ {or $A^Tr_m$}.
{Given an initial guess $x_0$} for the solution of (\ref{LS}), a Krylov method is a projection method whose approximation subspace $\mathcal{A}_m$ is of the form $x_0+\K_m(\widetilde{A},\widetilde{r}_0)$, where $\K_m(\widetilde{A},\widetilde{r}_0)$ is a Krylov subspace defined as 
\begin{equation}\label{KrylovSp}
\K_m(\widetilde{A},\widetilde{r}_0)=\spn\{\widetilde{r}_0,\widetilde{A}\widetilde{r}_0,\dots,(\widetilde{A})^{m-1}\widetilde{r}_0\}\,,
\end{equation}
and is assumed to be of dimension $m$. Different Krylov methods are obtained by varying the conditions on $r_m$ {or $A^Tr_m$}, the matrix $\widetilde{A}$, and the vector $\widetilde{r}_0$ in $\Krym$: every square matrix linked to $A$, and any vector linked to $r_0$ can be potentially used (cf. \cite[Chaper 6]{Saad}). CG {(Conjugate Gradient)}, CGLS {(CG for Least Squares problems)}, 
GMRES {(Generalized Minimal Residual)}, and RR-GMRES {(Range Restricted GMRES)} are among the most popular Krylov subspace methods. 
Various theoretical considerations and many numerical experiments available in the literature show that Krylov methods are much more efficient than the gradient descent methods for the solution of (\ref{LS}). Indeed, when considering image deblurring problems, some Krylov methods can compute a good regularized solution in only a few iterations, i.e., requiring only a few matrix-vector products with $A$ and/or $A^T$. Unfortunately, Krylov subspace methods cannot be straightforwardly adopted to handle constrained problems in general, and problem (\ref{NNLS}) in particular.

\begin{figure}[htbp]
\centering
\begin{tabular}{cc}
\hspace{-0.6cm}{\small \textbf{(a)}} & \hspace{-0.6cm}{\small \textbf{(b)}}\\ 
\hspace{-0.6cm}\includegraphics[width=5cm]{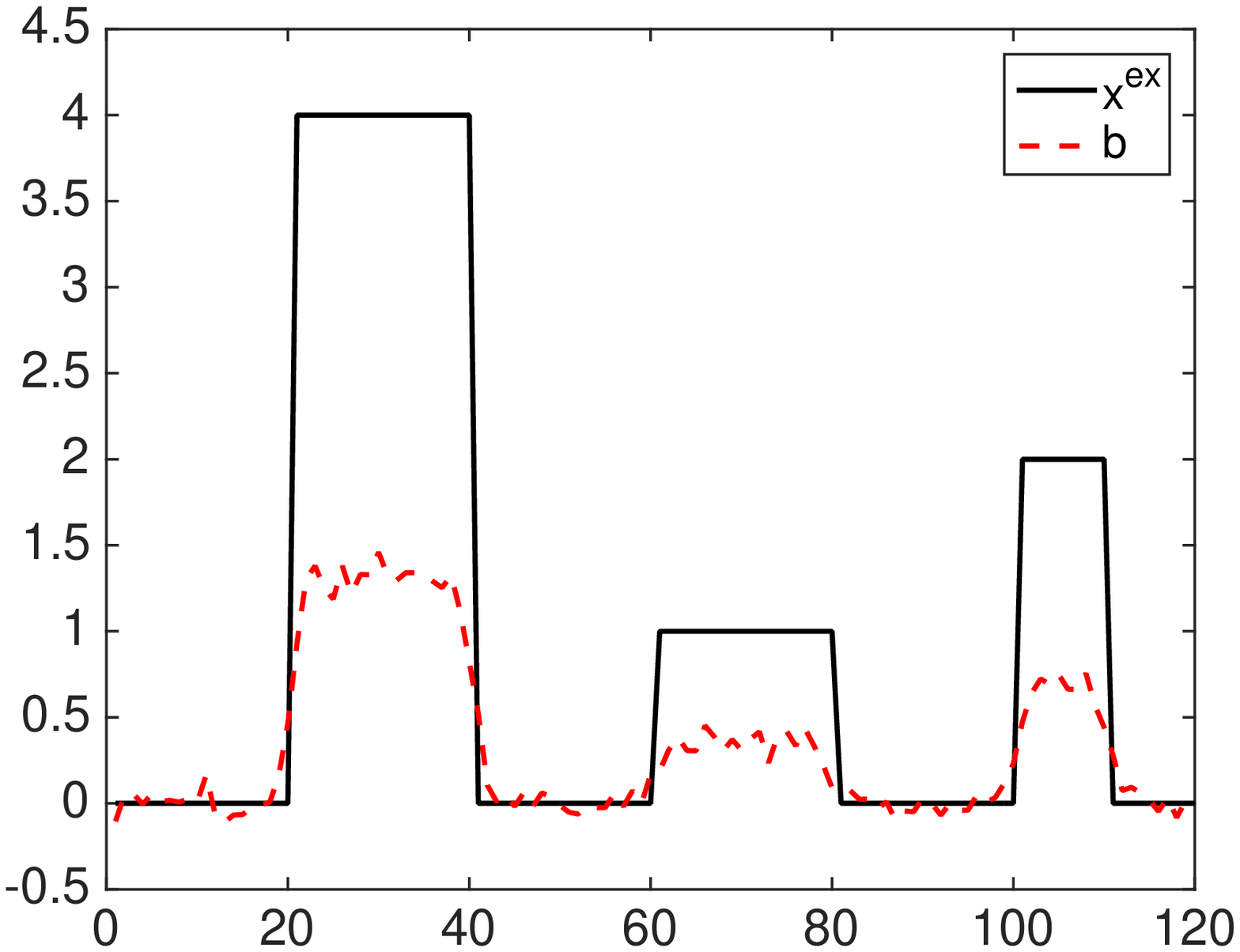} & 
\hspace{-0.6cm}\includegraphics[width=5cm]{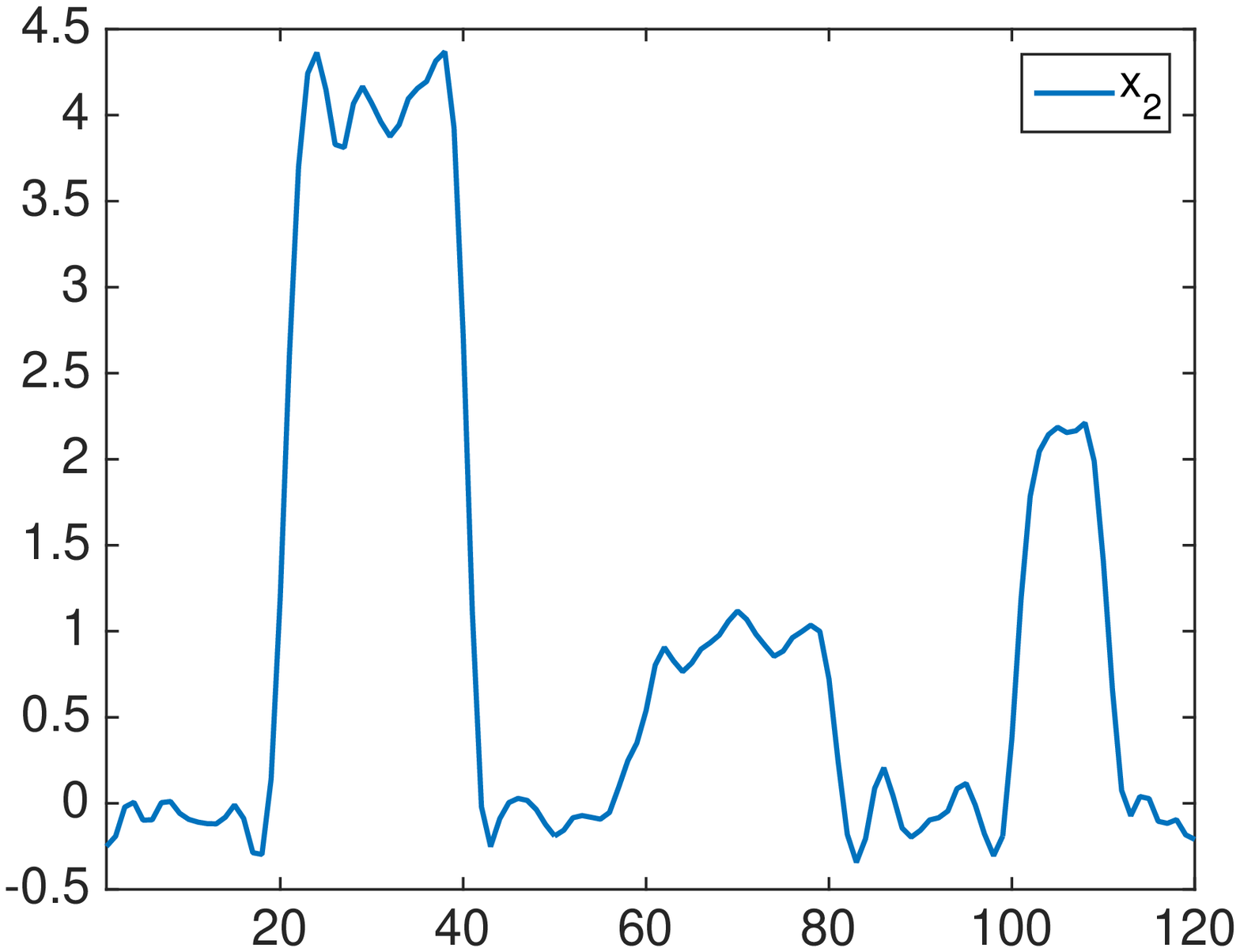}\\
\hspace{-0.6cm}{\small \textbf{(c)}} & \hspace{-0.6cm}{\small \textbf{(d)}}\\ 
\hspace{-0.6cm}\includegraphics[width=5cm]{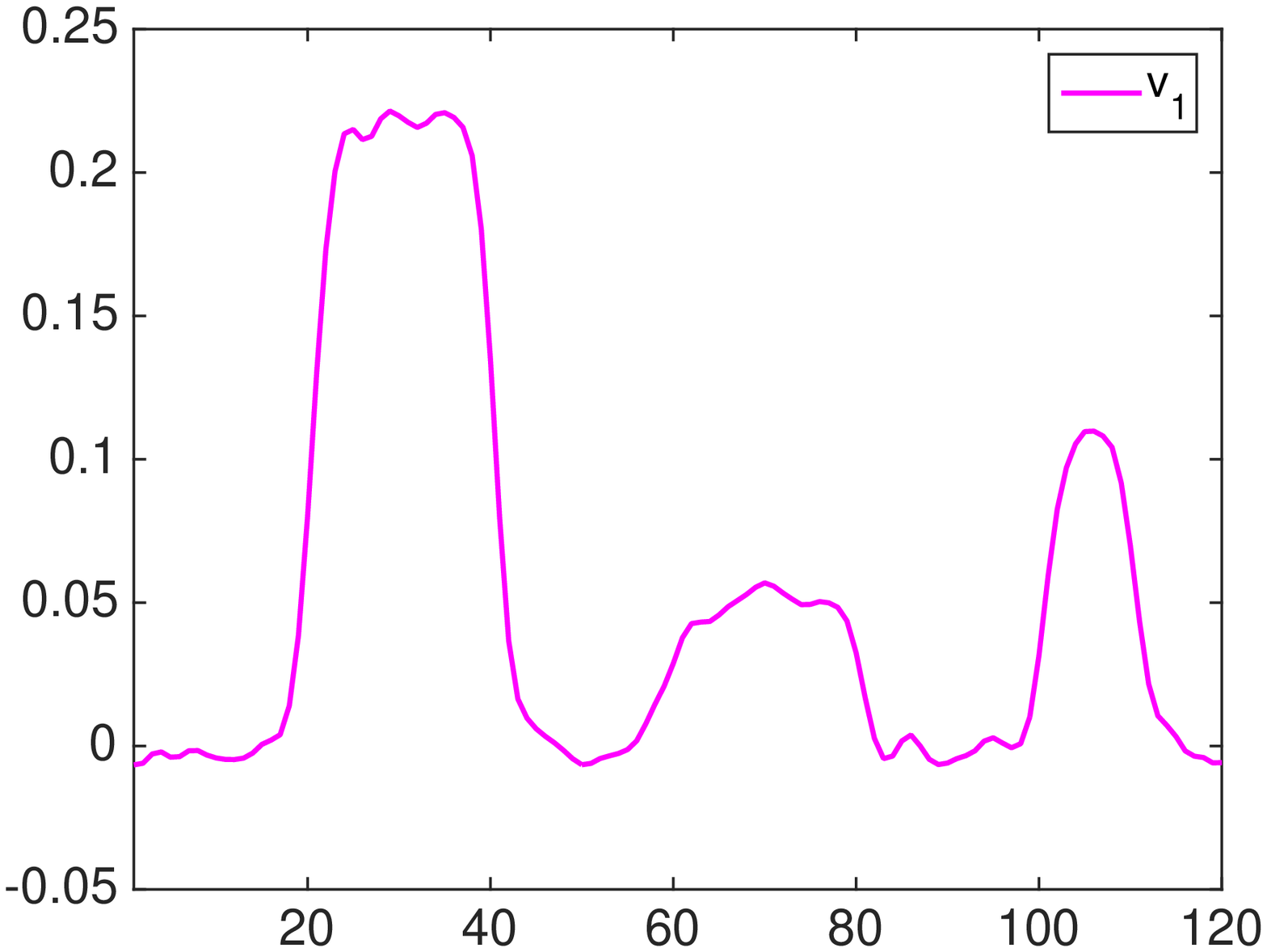} & 
\hspace{-0.6cm}\includegraphics[width=5cm]{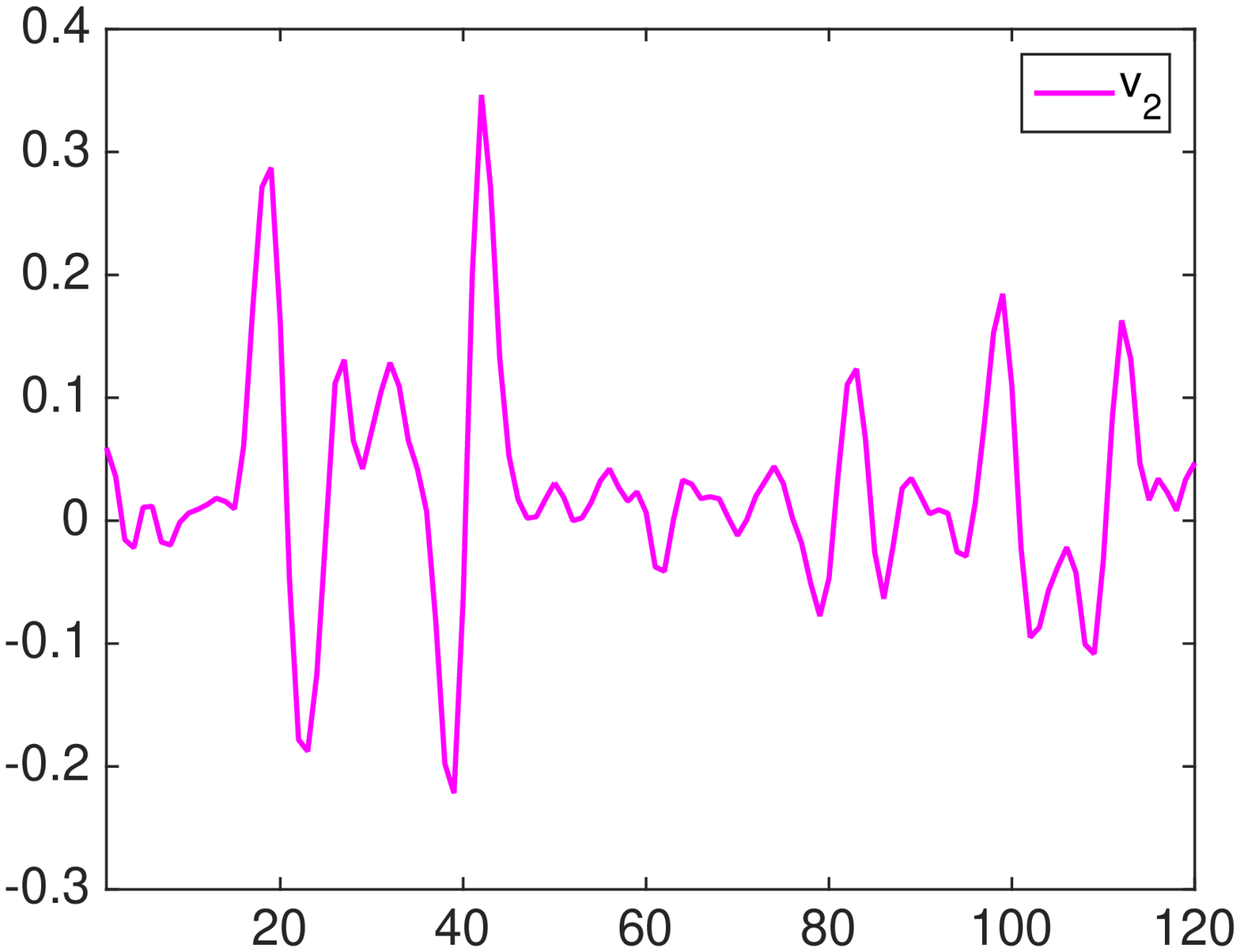}
\end{tabular}%
\caption{1D \tql image\tqr\ deblurring and denoising problem. The 1D nonnegative signal $\xex$ is displayed in frame \textbf{(a)}, along with the vector $b$ affected by Gaussian blur and Gaussian noise. Frame \textbf{(b)} displays the approximation $x_2$ obtained at the 2nd iteration of the CGLS method. Frames \textbf{(c)} and \textbf{(d)} display the {vectors $v_1$ and $v_2$}, which generate $\K_2(A^TA,A^Tb)$.}
\label{fig:example}
\end{figure}
The following considers only a version of the well-known CGLS method (cf. \cite[\S 7.4]{LS} and \cite[\S 6.7, \S 8.3]{Saad}), which can be applied to the normal equations (\ref{NE}). At the $m$th iteration, the condition
\begin{equation}\label{optRes}
x_{m} = \arg\min_{x\in x_0+\A_m}{\Phi(x)}
\end{equation}
is imposed, {with approximation subspace $\A_m=\K_m(A^TA,A^T{r}_0)$. One way of deriving CGLS is to first generate an orthonormal basis $\{v_1,\dots,v_m\}$ for the subspace 
$\K_m(A^TA,A^T{r}_0)$ for increasing values of $m$, by means of a Gram-Schmidt-like orthonormalization process. Taking $V_m=[v_1,\dots,v_m]\in\R^{N\times m}$ and considering the tridiagonal matrix $T_m = V_m^T(A^TA) V_m$,
an approximation $x_m$ for a solution of (\ref{LS}) satisfying the condition (\ref{optRes}) is obtained by solving the projected system
\begin{equation}\label{ProjSys}
T_my_m=\|r_0\|_2e_1\,,\quad e_1=[1,0,\dots,0]^T\in\R^m,
\end{equation}
and by taking $x_m=x_0+V_my_m$ \cite[Chapter 6]{Saad}}. An analogous approach can be also adopted to derive the other Krylov subspace methods listed above. 
\cref{fig:example} provides a typical example of the difficulties encountered when trying to enforce nonnegativity within Krylov methods. In this case, the CGLS method  is extremely efficient to solve an unconstrained 1D deblurring problem. Indeed, after only 2 iterations {and with $x_0=0$}, the behaviour of $\xex$ is quite accurately recovered. Imposing nonnegativity constraints would enhance even more  the quality of the reconstruction. However, once $V_2$ is computed, $x_2$ is determined by only 2 parameters (i.e., the two entries of $y_2$ in (\ref{ProjSys})), and choosing them in order have $x_2$ nonnegative would heavily modify the overall behavior of the solution. Moreover, trivially trying to project the solution $x_2$ onto the nonnegative orthant would result in a new approximation not belonging to $\K_2(A^TA,A^Tb)$: the method so obtained would rapidly stagnate, with poor approximation properties (cf. Section \ref{sect:NumExp}). 
To the best of our knowledge, only heuristic approaches have been derived so far to approximate the solution of (\ref{NNLS}) within a Krylov subspace framework.
The strategies in \cite{CLRS, Kaufman} rely on inner-outer iteration cycles. In particular, the author of \cite{Kaufman} proposes to solve {the nonlinear system in} (\ref{KKT}) with a modified CGLS method: the matrix $X$ is only updated at the beginning of each outer iteration, so that it is fixed during each inner iteration cycle. The occurrence of a restart is determined by the amount of variations in two consecutive approximations of $\xex$. The authors of \cite{CLRS} instead propose to employ the CGLS, GMRES, or RRGMRES methods during the inner iterations to solve problems like (\ref{LS}),
and restart with the nonnegatively projected last approximation of $\xex$ as soon as the discrepancy principle is satisfied. Although very efficient, this approach can only guarantee nonnegativity at each restart, and not during the inner iterations.
\cref{tab:acron} 
lists the acronyms associated with the most notable methods introduced so far.
\begin{table}
\centering
\footnotesize
\caption{Popular methods for the solution of problems (\ref{NNLS}) or (\ref{covNNLS}), related acronyms, and bibliographic references.}\label{tab:acron}
\begin{tabular}{|llcc|}
\hline
{\textbf{acronym}} & {\textbf{method}} & {\textbf{problem}} & {\textbf{reference}}\\\hline
NNSD & Projected Steepest Descent  & (\ref{NNLS}), (\ref{covNNLS}) & \cite{BerNagy13}\\ \hline
FISTA & Fast iterative shrinkage thresholding algorithms (ISTA)  & (\ref{NNLS}), (\ref{covNNLS}) & \cite{FISTA}\\ \hline
MRNSD & Modified Residual-Norm Steepest Descent & (\ref{NNLS}) & \cite{NS00}\\ \hline
PMRNSD & Preconditioned MRNSD & (\ref{NNLS}) & \cite{NS00} \\ \hline
WMRNSD & Weighted MRNSD & (\ref{covNNLS}) & \cite{BN06}\\ \hline
KWMRNSD & $k$-weighted MRNSD & (\ref{covNNLS}) & \cite{BN06}\\ \hline
ReSt NNCG & Restarted CGLS with nonnegativity at each restart & (\ref{NNLS}) & \cite{CLRS}\\ \hline
\end{tabular}
\end{table}

\subsection{{Contributions}}\label{sect:contr}

The goal of this paper is to present a reliable, efficient, and still {somewhat} heuristic new method to enforce nonnegativity within Krylov subspace methods. The new {approach} merges the ability of delivering high-quality approximations typical {for} the MRNSD methods, with a fast convergence typical {for} Krylov methods for unconstrained problems.
The example proposed in Section \ref{sect:survey} suggests that, to succeed in this task, one should modify the usual space $\K_m(A^TA, A^Tr_0)$, and its basis vectors collected in $V_m$. 

The starting point for the new method {is the nonlinear system of equations in} (\ref{KKT}). Similarly to the MRNSD method, at the $m$th step of an iterative solver, the approximation $X\simeq\dg(x_{m-1})=:X^{(m)}$ is chosen, so that the iteration-dependent linear system
\begin{equation}\label{LeftPrecLS}
X^{(m)}A^T(b-A x)=0
\end{equation}
should be solved. {The condition $x_m\geq0$ is imposed, while the condition $A^T(A x_m-b)\geq0$ is discarded during the iterations. The reason behind this choice lies in the \tql semi-convergence\tqr\ phenomenon \cite{Survey15, PCH10}, which is typical of regularizing iterative methods applied to solve least squares problems (\ref{LS}), where the matrix $A$ is of ill-determined rank, and the data vector $b$ is affected by noise. \tql Semi-convergence\tqr\ means that the solution $x_m$ approximates the solution $\xex$ of (\ref{exact}) at the beginning of the iterations, while it approaches the un-regularized and noise-dominated solution of (\ref{LS}) during the following iterations. \tql Semi-convergence\tqr\ can usually be limited if some additional regularization is imposed during the iterative process. When problem (\ref{NNLS}) is solved (or, equivalently, when conditions (\ref{KKT}) are satisfied), \tql semi-convergence\tqr\ could still occur, as a nonnegative solution minimizing $\Phi(x)$ could be heavily corrupted by noise. Therefore, the goal of the new method is to efficiently deliver a nonnegative solution $x_m$ at each iteration, and approximately satisfy the nonlinear system in (\ref{KKT}). More precisely,} the new idea is to consider a CGLS-like method {for the normal equations (\ref{NE}), devised in such a way that the variable {left} \tql preconditioner\tqr\ $X^{(m)}$ {in (\ref{LeftPrecLS})} can be handled within the iterations,} i.e., a so-called Flexible CGLS (FCGLS) method. 
The word \tql preconditioner\tqr\ has been quoted, since the goal when solving (\ref{LeftPrecLS}) is to impose regularization within the iterations: therefore, in this setting, preconditioning is not intended in a classical sense, i.e., with the aim of accelerating the convergence of an iterative method. Flexible Krylov subspace methods were originally introduced a couple of decades ago to allow an increasingly improved preconditioner at each iteration of a standard Krylov subspace method: a typical instance is when the system defining the preconditioner is solved iteratively (possibly by another Krylov subspace method) with variable tolerance on the stopping criterion (cf. \cite[Chapter 12]{Axe}, \cite[\S 9.4]{Saad}, \cite{Simoncini}, and the references therein).  The new FCGLS method derived in this paper is related to the Flexible CG (FCG) method described in \cite{FCG}: indeed, FCGLS can be regarded as FCG applied to a normal equation system.

The {specific} use of flexible Krylov subspaces for regularizing linear ill-posed problems is quite recent (cf. \cite{sparsity, ReichelFGMRES}). 
Both the approaches in \cite{sparsity, ReichelFGMRES}, and the one proposed in this paper, aim at regularizing the original problem by including new information about the solution as soon as a new iteration is computed. For this reason, flexible methods are inherently very efficient. Indeed,
when adopting a restarting strategy with a nonnegatively projected initial guess or an updated \tql preconditioner\tqr\ for the new iterations \cite{CLRS, Kaufman}, a new Krylov subspace is generated at each outer iteration.
On the contrary, the nonnegative FCGLS (NN-FCGLS) method presented in this paper (i.e., FCGLS devised to specifically deal with system (\ref{LeftPrecLS}) and secure $x_m\geq0$) relies on both flexibility and suitable restarts, in such a way that only one (flexible) Krylov subspace is generated during the iterations. 

NN-FCGLS can be also extended to handle problems affected by both Gaussian and Poisson noise, so that an approximate solution of (\ref{covNNLS}) is computed. If the covariance matrix $\covM$ is fixed, i.e., when approximation (\ref{covMfix}) is used, 
then the NN-FCGLS scheme can incorporate an additional preconditioner. If the covariance matrix $\covM$  is adaptive, i.e., when approximation (\ref{covMvar}) is used, then the NN-FCGLS scheme can incorporate an updated $\covM^{(k)}$ at the $k$th restart. In this way, by exploiting the potentialities of flexible Krylov subspaces, the newly-proposed strategies
embrace and improve the class of the MRNSD methods \cite{BN06}.

The remaining part of this paper is organized as follows: in Section \ref{sect:flexi}, the FCGLS method is derived {in a general framework}. 
In Section \ref{sect:NN}, {the} NN-FCGLS {for the approximate solution of (\ref{NNLS})} is introduced, and some its properties are discussed. In Section \ref{sect:covMvar}, two extensions of NN-FCGLS for the approximate solution of (\ref{covNNLS}) are presented. 
Section \ref{sect:NumExp} displays many numerical experiments performed with the new methods, and comparisons with the strategies reviewed in Section \ref{sect:survey}. Finally, Section \ref{sect:final} draws some conclusions and presents possible extensions.

\section{Flexible Krylov subspace methods}\label{sect:flexi}

Krylov subspace methods can be formulated as in Section \ref{sect:survey} or, alternatively, in a mathematically equivalent way that consists in explicitly updating the current solution along a set of search directions: a new search direction is added at each iteration \cite[Chapter 12]{Axe}.
More precisely, in the most general case, at the $m$th iteration of a Krylov subspace method for solving (\ref{LS}), one requires
\begin{equation}\label{updatex}
x_{m}=x_{m-1}+\sum_{j=0}^{m-1}\alpha_j^{(m-1)}d_j
\end{equation}
and computes the new search directions as
\begin{equation}\label{updated}
d_{m}=\bz_{m}+\sum_{j=0}^{m-1}\beta_j^{(m-1)}d_j\,,
\end{equation}
where $\bz_{m}$ depends on the approximation space chosen in (\ref{optRes}). This way of defining Krylov subspace methods is natural when imposing nonnegativity constraints, since one has the direct expression (\ref{updatex_simple}) for $x_m$. For CGLS-like methods applied to solve (\ref{NE}), $\bz_m$ is a vector related to the normal equation residual $A^Tr_{m}$, and the coefficients $\alpha_j^{(m-1)}$, $\beta_j^{(m-1)}$, $j=0,\dots,m-1$, are determined at each iteration by imposing the optimality condition (\ref{optRes}) and 
the orthogonality of $A d_i$, $i=0,\dots,m-1$. 
To enforce (\ref{optRes}), one takes $\partial {\Phi}(x_m)/\partial\alpha_i^{(m-1)}=0$ for $i=0,\dots,m-1$. This amounts to
\begin{equation}\label{OrthResRel}
(b-A x_m,A d_i) = 0\,,\quad i=0,\dots,m-1\,,
\end{equation}
or, equivalently, by exploiting (\ref{updatex}) and the orthogonality of $A d_i$,
\[
(r_{m-1},A d_i) = \sum_{j=0}^{m-1}\alpha_j^{(m-1)}(A d_j,A d_i)=\alpha_i^{(m-1)}(A d_i,A d_i)\,.
\]
Here and in the following, $(\cdot,\cdot)$ denotes the standard inner product on $\R^N$ or $\R^M$.
By considering (\ref{OrthResRel}) at step $m-1$, one gets that the only nonvanishing quantity on the left side of the previous equality is $(r_{m-1},A d_{m-1})$, and, therefore,
\begin{equation}\label{alpha}
\alpha_{m-1}^{(m-1)}=\frac{(r_{m-1},A d_{m-1})}{(A d_{m-1},A d_{m-1})}=:\alpha_{m-1},\quad
\alpha_{j}^{(m-1)}=0\,,\quad j=0,\dots,m-2\,.
\end{equation}
The update formula (\ref{updatex}) simplifies as
\begin{equation}\label{updatex_simple}
x_{m}=x_{m-1}+\alpha_{m-1}d_{m-1}\,,
\end{equation}
and the corresponding residual $r_m$ can be analogously updated as
\begin{equation}\label{updater}
r_{m}=r_{m-1}-\alpha_{m-1}A d_{m-1}\,.
\end{equation}
By enforcing the orthogonality of $A d_j$ in (\ref{updated}) it is immediate that
\begin{equation}\label{def_beta}
\beta_j^{(m-1)}=-\frac{(A \bz_m,A d_j)}{(A d_j,A d_j)}\,,\quad j=0,\dots,m-1\,.
\end{equation}
In the particular case of the unpreconditioned CGLS method, one takes $\bz_j=A^Tr_j{=:z_j}$. By exploiting (\ref{updated}) and (\ref{OrthResRel}), $\alpha_{m-1}$ in (\ref{alpha}) can be alternatively redefined as
\begin{equation}\label{def_alpha}
\alpha_{m-1}=\frac{(z_{m-1},d_{m-1})}{(A d_{m-1},A d_{m-1})}=\frac{(z_{m-1},\bz_{m-1})}{(A d_{m-1},A d_{m-1})}
=\frac{(z_{m-1},z_{m-1})}{(A d_{m-1},A d_{m-1})}\,;
\end{equation}
moreover, after some straightforward algebraic manipulations that mainly involve (\ref{OrthResRel}) and (\ref{updater}), relation (\ref{updated}) reduces to
\begin{equation}\label{updated_simple}
d_{m}=\bz_{m}+\beta_{m-1}d_{m-1}=z_{m}+\beta_{m-1}d_{m-1}\,,
\end{equation}
where, to keep the notation light, 
\[
\beta_{m-1}^{(m-1)}=-\frac{(A \bz_m,A d_{m-1})}{(A d_{m-1},A d_{m-1})}
=\frac{(\bz_m,z_m-z_{m-1})}{(A d_{m-1},A d_{m-1})\alpha_{m-1}}=
\frac{(z_m,z_m)}{(z_{m-1},z_{m-1})}=:\beta_{m-1}\,.
\]
The simple two-term update formula (\ref{updated_simple}) is linked to the fact that the matrix $T_m$ in (\ref{ProjSys}) is tridiagonal. A fixed symmetric positive definite left preconditioner {$L\in\R^{N\times N}$ for the system (\ref{NE})} can be efficiently incorporated into CGLS by taking $\bz_j=Lz_j=LA^T r_j$ (see \cite[Chapter 9]{Saad}).  

When considering {an iteration-dependent left preconditioner for the system (\ref{NE}), i.e., when solving a system of the form
\begin{equation}\label{LeftPrecLSL}
L^{(m)}A^TA x = L^{(m)}A^Tb\,,
\end{equation}
where the matrix $L^{(m)}\in\R^{N\times N}$ may vary at each iteration, }the short recurrence formula (\ref{updated_simple}) does not hold anymore. In theory, in these situations, the full recurrence (\ref{updated}) should be implemented. {In the following, the main steps to derive a new flexible version of CGLS, dubbed FCGLS, are outlined.} {The starting points for FCGLS are still the update formulas (\ref{updatex}) and (\ref{updated}), where 
\begin{equation}\label{varz}
\bz_m=L^{(m)}z_m=L^{(m)} A^T r_{m}
\end{equation}
in the latter. By enforcing condition (\ref{optRes}), and requiring the vectors $A d_i$, $i=0,\dots,m$, to be orthogonal, the solution can be expressed as in (\ref{updatex_simple}), with $\alpha_{m-1}$ given by (\ref{alpha}), while the new descent direction is given by (\ref{updated}), with the scalars $\beta_j^{(m-1)}$ given by (\ref{def_beta}). Note that, because of multiplication by the iteration-dependent matrix $L^{(m)}$, the full recurrence (\ref{updated}) should be implemented. Therefore, all the vectors $d_j$ and $A d_j$, $j=0,\dots,m-1$, must be stored in order to compute $d_m$ as in (\ref{updated}): this may result in a high storage cost as the number of iterations increases. The computational cost of each iteration of the FCGLS method is dominated by one matrix-vector product with $A^T$, one matrix-vector product with $A$, the update of $L^{(m)}$, and one matrix-vector product with $L^{(m)}$. More precisely, at the $m$th iteration, the residual $r_m$ must be multiplied by $A^T$ in order to compute the normal equation residual $z_m$, and the preconditioned normal equation residual $\bz_m=L^{(m)}z_m$ must be multiplied by $A$ in order to compute the coefficients $\beta_j^{(m-1)}$ and the new vectors $d_m$ and $A d_m$ (relation (\ref{updated}) premultiplied by $A$ is used for the latter).}

{To avoid high storage costs, sometimes it is appropriate to truncate the recursion (\ref{updated}) and retain at most $\hat{m}>0$ terms. If a maximum number of iterations $m_{\max}$ is assigned, the choice $\hat{m}=m_{\max}$ corresponds to the full (untruncated) recursion (\ref{updated}), while the choice $\hat{m}=1$ corresponds to the CGLS-like recurrence (\ref{updated_simple}). In \cite{FCG}, a cyclic approach for defining the truncation parameter is outlined, which basically gives rise to a \tql mixed truncation-restart\tqr\ strategy. When truncation happens, the orthogonality of all the directions $A d_j$, $j=0,\dots,m-1$ does not hold anymore; as a consequence, 
also the optimality property (\ref{optRes}) is not guaranteed anymore (see (\ref{OrthResRel}) - (\ref{updatex_simple})). However, in the FCG case, the author of \cite{FCG} claims that no deterioration of the convergence might happen, and that the biggest difference in the behavior of the truncated and untruncated versions should be expected when the extremal eigenvalues are well separated (see also \cite{Simoncini} and the references therein). In the case of matrices with ill-determined rank, the largest singular values are typically separated, while the smallest ones are clustered, so nothing can be concluded in principle. The above derivations are summarized in Algorithm 1.} 

\begin{algorithm}
\caption{Flexible CGLS (FCGLS) method}
\label{alg:FCGLS}
\begin{algorithmic}
\State Input: $A$, $b$, $L^{(0)}$, $x_0$, $\hat{m}$, $m_{\max}$. 
\State Initialize: 
$r_0 = b- A x_0$, $z_0 = A^Tr_0$, $\bz_0=L^{(0)}z_0$.
\State Take $d_0=\bz_0$, and compute $w_0 := A d_0 = A \bz_0$.
\State For $m=1,\dots,$ till a stopping criterion is satisfied OR $m=m_{\max}$:
\begin{enumerate}
\item\label{step:1} Set $\alpha_{m-1}=(r_{m-1}, w_{m-1})/(w_{m-1},w_{m-1})$.
\item\label{step:2} Update $x_{m} = x_{m-1} + \alpha_{m-1} d_{m-1}$.
\item\label{step:3} Update $L^{(m)}$.
\item\label{step:4} Update $r_{m} = r_{m-1} - \alpha_{m-1} w_{m-1}$.
\item\label{step:z1} Compute $z_{m} = A^Tr_{m}$ and $\bz_{m} = L^{(m)}z_{m}$.
\item Compute $A \bz_m$.
\item\label{step:ind1} For $j=\max\{0,m-\hat{m}\},\dots,m-1$, set 
$\beta_{j}^{(m-1)}=-(A \bz_{m},w_j)/(w_j,w_j)$.
\item\label{step:ind2} Compute $d_{m} = \bz_{m} + \sum_{j=\max\{0,m-\hat{m}\}}^{m-1}\beta_j^{(m-1)}d_j$.
\item\label{step:10} Update $w_{m}:= A d_m = A\bz_{m} + \sum_{j=\max\{0,m-\hat{m}\}}^{m-1}\beta_j^{(m-1)}w_j$.
\end{enumerate}
\end{algorithmic}
\end{algorithm}

{Algorithm \ref{alg:FCGLS} breaks down at the $m$th iteration if $\alpha_{m-1}=0$. This means that, at the $(m-1)$th iteration, a descent direction $d_{m-1}$ has been computed, which is orthogonal to the current normal equation residual $z_{m-1}$. Although a formal convergence proof for FCGLS would require additional assumptions on $A$ and $L^{(m)}$, one can claim that, if no breakdown happens in the untruncated version of Algorithm \ref{alg:FCGLS}, then $x_m$ converges monotonically to a solution of (\ref{LS}), i.e., $\|r_{m}\|_2\leq\|r_{m-1}\|_2$. This is immediate from the fact that the approximation subspaces of the solution are nested, and the optimality condition (\ref{optRes}) on the residual is imposed at each iteration.}

\section{Incorporating nonnegativity constraints}\label{sect:NN}
{In order to approximate a solution of (\ref{NNLS}), the KKT conditions are enforced, i.e., at the $m$th iteration of an iterative solver, the system (\ref{LeftPrecLS}) is considered and the constraint $x_m\geq0$ is imposed. The normal equation system  (\ref{LeftPrecLS}) can be regarded as a left-preconditioned system (\ref{LeftPrecLSL}), where the variable \tql preconditioner\tqr\ is defined at the $m$th iteration as $L^{(m)}:=X^{(m)}=\dg(x_{m-1})$. Therefore, the FCGLS method can be in principle used to solve (\ref{LeftPrecLS}). However, some modifications of the generic framework outlined in Algorithm \ref{alg:FCGLS} should be considered, which will lead to the NonNegative Flexible CGLS (NN-FCGLS) method described in Algorithm \ref{alg:NN-FCGLS}.}

{First of all, a nonnegative initial guess $x_0$ should be set; typical choices for $x_0$ are the projections of $b$ or $A^Tb$ onto the nonnegative orthant. Moreover, since a solution update of the form (\ref{updatex_simple}) is considered, one should bound the step-length  along the search directions to  guarantee nonnegativity at each iteration (this remedy is already suggested in \cite{BN06, Kaufman, NS00}). It is immediate to prove that, in the FCGLS case, the bounded step-length $\balpha_{m-1}$ is computed as follows:}
\begin{equation}\label{def_alpha_mod}
\balpha_{m-1} = \min\left(\,\alpha_{m-1}, \min\left( -x_{m-1}(d_{m-1}<0)/d_{m-1}(d_{m-1}<0) \right) \,\right),\,
\end{equation}
where $\alpha_{m-1}$ is defined as in (\ref{def_alpha}), and the MATLAB-like notations $x_{m-1}(d_{m-1}<0)$ and $d_{m-1}(d_{m-1}<0)$ mean that only the components of the vectors $x_{m-1}$ and $d_{m-1}$ corresponding to negative values of $d_{m-1}$ are evaluated, respectively. {In this way, the new step-length $\balpha_{m-1}$ is automatically determined by simultaneously imposing $x_m\geq 0$, the optimality condition (\ref{optRes}), and the orthogonality of $A d_j$, $j=0,\dots,m-1$. Therefore, if the full recurrence (\ref{updated}) is considered (i.e., if $\hat{m}=\mmax$ in Algorithm \ref{alg:FCGLS}), the NN-FCGLS residuals decrease monotonically, i.e., $\|r_{m}\|_2\leq\|r_{m-1}\|_2$. Moreover, because of the additional constraint $x_m\geq 0$, the NN-FCGLS residuals may decrease slower than the FCGLS and the standard CGLS ones (applied to solve the unconstrained least squares problem (\ref{LS})).}
The following holds:
\begin{proposition}
The scalar $\balpha_{m-1}$ defined in (\ref{def_alpha_mod}) is nonnegative.
\end{proposition}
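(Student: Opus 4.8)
The plan is to use that $\balpha_{m-1}$ in (\ref{def_alpha_mod}) is the minimum of two quantities, so it suffices to prove that each of them is nonnegative. The argument rests on the nonnegativity of the iterates, which I would establish together with the statement by a joint induction: one chooses $x_0\geq 0$, and the inductive step shows that if $x_0,\dots,x_{m-1}\geq 0$ then (i) $\balpha_{m-1}\geq 0$ (the present claim) and (ii) $x_m=x_{m-1}+\balpha_{m-1}d_{m-1}\geq 0$, the latter being the usual ratio-test argument built into (\ref{def_alpha_mod}). Thus, when proving (i), I may freely assume $x_{m-1}\geq 0$ and $x_{m-2}\geq 0$.

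First I would handle the ratio-test term $\min\bigl(-x_{m-1}(d_{m-1}<0)/d_{m-1}(d_{m-1}<0)\bigr)$. Every index $i$ contributing to this minimum satisfies $(d_{m-1})_i<0$ by definition, while $(x_{m-1})_i\geq 0$ by the induction hypothesis; hence each ratio $-(x_{m-1})_i/(d_{m-1})_i$ is nonnegative, being the negative of a nonnegative number divided by a negative number. Consequently the minimum over these indices is nonnegative. (If $d_{m-1}$ has no negative component, this inner minimum is $+\infty$ by convention and $\balpha_{m-1}=\alpha_{m-1}$, a case settled by the next step.)

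Next I would show $\alpha_{m-1}\geq 0$. Starting from (\ref{alpha}) and the adjoint identity $(r_{m-1},Ad_{m-1})=(z_{m-1},d_{m-1})$, I would reduce $\alpha_{m-1}$ to the form appearing in (\ref{def_alpha}), namely $\alpha_{m-1}=(z_{m-1},\bz_{m-1})/(Ad_{m-1},Ad_{m-1})$. The denominator equals $\|Ad_{m-1}\|_2^2$, which is strictly positive whenever $\alpha_{m-1}$ is well defined (i.e.\ $Ad_{m-1}\neq 0$), so the sign of $\alpha_{m-1}$ is that of the numerator. In the NN-FCGLS setting one has $\bz_{m-1}=L^{(m-1)}z_{m-1}$ with $L^{(m-1)}=X^{(m-1)}=\dg(x_{m-2})$ a nonnegative diagonal matrix, since $x_{m-2}\geq 0$; therefore $(z_{m-1},\bz_{m-1})=(z_{m-1},\dg(x_{m-2})z_{m-1})=\sum_i (x_{m-2})_i (z_{m-1})_i^2\geq 0$. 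Hence $\alpha_{m-1}\geq 0$, and taking the minimum with the (nonnegative) ratio-test term yields $\balpha_{m-1}\geq 0$.

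The step I expect to be the main obstacle is the reduction $(z_{m-1},d_{m-1})=(z_{m-1},\bz_{m-1})$ used to expose the sign of the numerator: expanding $d_{m-1}$ through (\ref{updated}) produces cross terms $(z_{m-1},d_j)=(r_{m-1},Ad_j)$ with $j<m-1$, and these must vanish. They do so exactly by the orthogonality relation (\ref{OrthResRel}), so the clean sign argument relies on the full (untruncated) recurrence being in force; in the truncated variant (\ref{OrthResRel}) need not hold, and this cancellation would have to be reexamined. The remaining ingredients—carrying the induction hypothesis $x_{m-1},x_{m-2}\geq 0$ and the empty-index convention in the ratio test—are routine.
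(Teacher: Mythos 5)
Your proof is correct and follows essentially the same route as the paper: the ratio-test term is nonnegative by the sign of its numerator and denominator, and $\alpha_{m-1}\geq 0$ is reduced, via the orthogonality relations (\ref{OrthResRel}), to the nonnegativity of $(z_{m-1},X^{(m-1)}z_{m-1})=\sum_i [X^{(m-1)}]_{ii}(z_{m-1})_i^2$ with $X^{(m-1)}$ a nonnegative diagonal matrix. Your explicit induction on the nonnegativity of the iterates, and your caveat that the cancellation of the cross terms $(r_{m-1},Ad_j)$ relies on the untruncated recurrence, merely make precise what the paper leaves implicit.
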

\begin{proof}
When $\balpha_{m-1}=\alpha_{m-1}$, directly from (\ref{alpha}) it suffices to prove that $(r_{m-1},A d_{m-1})$ is nonnegative. This follows from
\begin{eqnarray*}
(r_{m-1},A d_{m-1}) &=& \left(r_{m-1},A\left(\bz_{m-1}
+\sum_{j=\max\{0, m-\hat{m}\}}^{m-2}\beta_j^{(m-2)}d_j\right)\right)\\
&=& (r_{m-1},A\bz_{m-1})+\sum_{j=\max\{0, m-\hat{m}\}}^{m-2}\beta_j^{(m-2)}(r_{m-1},A d_j)=
(r_{m-1},A\bz_{m-1})\\
&=& (A^Tr_{m-1},\bz_{m-1}) = (z_{m-1},X^{(m-1)}z_{m-1})\geq0\,,
\end{eqnarray*}
where the following are exploited: 
the property (\ref{OrthResRel}), the truncated update at step \ref{step:ind2} of Algorithm \ref{alg:FCGLS}, and the fact that the entries of the diagonal matrix $X^{(m-1)}$ are nonnegative. When $\balpha_{m-1}\neq\alpha_{m-1}$, $\balpha_{m-1}$ is nonnegative by definition.
\end{proof}

It must be remarked that the FCGLS method with $\alpha_{m-1}$ in (\ref{def_alpha})
replaced by $\balpha_{m-1}$ is very prone to stagnation. Indeed, $\balpha_{m-1}=0$ as soon as $[x_{m-1}]_i=0$ for some $i=1,\dots,N$ such that $[d_{m-1}]_i<0$. At this point, $\balpha_n=0$ for all $n\geq m$, so that no updates of $x_n$ happen, and Algorithm \ref{alg:FCGLS} with the choice (\ref{def_alpha_mod}) breaks down. The same is not true for the class of the MRNSD methods since, if $[x_{m-1}]_i=0$, then $[d_{m-1}]_i=0$, cf. (\ref{MRNSD}). In order to overcome stagnation, NN-FCGLS relies on suitable restarts of FCGLS: a restart happens as soon as a maximum number of inner iterations $\mmax^{\text{in}}$ is performed, or $\balpha_m=0$. The iterations are terminated as soon as a stopping criterion is satisfied or, alternatively, when a maximum number of outer iterations $\mmaxout$ is performed.
If a good estimate of the norm of the noise $\|\eta\|_2$ is known, a typical stopping criterion is the discrepancy principle (see \cite[Chapter 5]{PCH10} and \cite{CLRS}), which prescribes to terminate the iterations when $\|r_m\|_2$ drops below $\|\eta\|_2$. {If $\|\eta\|_2$ is not available, then one can monitor the relative change in the residual norms between two successive iterations, 
and stop when it drops below a prescribed tolerance. In addition to preventing stagnation, the restarting strategy of NN-FCGLS is beneficial to reduce the storage requirements of FCGLS, assuming that the untruncated update (\ref{updated}) of $d_m$ is considered, and that $\mmaxin$ is low. Notation-wise, when considering NN-FCGLS, it is appropriate to denote some of the vectors appearing in Algorithm \ref{alg:FCGLS} by double indices: the lower index counts the number of inner iterations, while the upper index counts the number of outer iterations.}
\begin{algorithm}
\caption{NonNegative FCGLS (NN-FCGLS) method}
\label{alg:NN-FCGLS}
\begin{algorithmic}
\State Input: $A$, $b$, $x_0^0\geq 0$, $\hat{m}$, $\mmaxin$, $\mmaxout$.\smallskip
\State For $k=1,\dots,$ till a stopping criterion is satisfied OR $k=\mmaxout$:\medskip
\begin{itemize}
\item Take $L^{(0)}=X^{(0)}=\dg(x_0^{k-1})$,
\item[]$r_0^{k-1} = b- A x_0^{k-1}$, $\bz_0^{k-1}=L^{(0)}A^Tr_0^{k-1}$, $d_0^{k-1}=\bz_0^{k-1}$, 
and $w_0^{k-1} = A \bz_0^{k-1}$.\smallskip
\item\label{step:inner} For $m=1,\dots$ till $\balpha_{m-1}=0$, OR $m=\mmaxin$, OR a stopping criterion is satisfied:\smallskip
\begin{itemize}
\item[] Run steps \ref{step:1} -- \ref{step:10} of Algorithm \ref{alg:FCGLS}. In particular:
\begin{itemize}
\item at step \ref{step:2} compute $x_m^{k-1}$, with $\alpha_{m-1}$ replaced by $\balpha_{m-1}$, computed as in (\ref{def_alpha_mod});
\item at step \ref{step:3} take $L^{(m)}=X^{(m)}=\dg(x_m^{k-1})$;
\item at steps \ref{step:4}--\ref{step:10} compute $r_m^{k-1}$, $\bz_m^{k-1}$, $d_m^{k-1}$, 
and $w_m^{k-1}$.\smallskip
\end{itemize}
\end{itemize}
\item  Let $n_k$ be the stopping iteration. Take $x_0^{k}=x_{n_k}^{k-1}$.
\end{itemize}
\end{algorithmic}
\end{algorithm}
Some properties of Algorithm \ref{alg:NN-FCGLS} are derived in the following two results. 
\begin{proposition}\label{prop:break}
{If, at the beginning of the $k$th outer iteration, the $i$th component of $d_0^{k-1}$ is not zero, then the $i$th component of $x_0^{k-1}$ is not zero.} 
\end{proposition}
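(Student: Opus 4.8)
The plan is to unwind the initialization of the descent direction $d_0^{k-1}$ at the start of the $k$th outer cycle of Algorithm \ref{alg:NN-FCGLS}, and to exploit the purely diagonal (componentwise) action of the \tql preconditioner\tqr\ $X^{(0)}$.

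First I would recall from the initialization block of the $k$th outer iteration that $d_0^{k-1}=\bz_0^{k-1}=L^{(0)}A^Tr_0^{k-1}$, where $L^{(0)}=X^{(0)}=\dg(x_0^{k-1})$. Because $X^{(0)}$ is a diagonal matrix, premultiplication by it scales each entry independently, so that $[d_0^{k-1}]_i=[x_0^{k-1}]_i\,[A^Tr_0^{k-1}]_i$ for every $i=1,\dots,N$. This factorization is the crux of the argument.

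The key step is then a contrapositive reading of this identity: if $[x_0^{k-1}]_i=0$, the right-hand side vanishes and hence $[d_0^{k-1}]_i=0$, irrespective of the value of $[A^Tr_0^{k-1}]_i$. Equivalently, $[d_0^{k-1}]_i\neq0$ forces $[x_0^{k-1}]_i\neq0$ (and, incidentally, $[A^Tr_0^{k-1}]_i\neq0$ as well), which is exactly the assertion of the proposition. This mirrors the analogous property (\ref{MRNSD}) enjoyed by the MRNSD methods, and explains why each restart of NN-FCGLS momentarily restores the stagnation-free behaviour discussed above.

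I do not expect a genuine obstacle, since the claim is a direct consequence of the componentwise scaling induced by the diagonal matrix $X^{(0)}$. The only point requiring a little care is the bookkeeping of the double indices: one must check that the matrix $L^{(0)}$ entering $d_0^{k-1}$ is built from the outer-cycle starting iterate $x_0^{k-1}$, and not from an inner iterate of a previous cycle. This is guaranteed by the first line of the outer loop in Algorithm \ref{alg:NN-FCGLS}, where $L^{(0)}=\dg(x_0^{k-1})$ is set before any inner iteration is performed.
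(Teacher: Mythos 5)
Your argument is correct and is essentially identical to the paper's own proof: both pass to the contrapositive and use the componentwise identity $[d_0^{k-1}]_i=[x_0^{k-1}]_i\,[A^T(b-Ax_0^{k-1})]_i$ coming from the diagonal structure of $X^{(0)}=\dg(x_0^{k-1})$ in the initialization of the $k$th outer cycle. Nothing is missing.
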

\begin{proof}
One equivalently proves that $[x_0^{k-1}]_i=0$ implies $[d_0^{k-1}]_i=0$. This follows immediately from the definition of $d_0^{k-1}$ given in Algorithm \ref{alg:NN-FCGLS}:
\[
\left[d_0^{k-1}\right]_i=\left[x_0^{k-1}\right]_i\left[A^T(b - A x_0^{k-1})\right]_i\,.
\]
\end{proof}
{Note that the above proposition might not be true for $d_{m}^{k-1}$ and $x_m^{k-1}$, with $m>0$. Indeed, it can happen that $[x_m^{k-1}]_i=0$ and 
\begin{eqnarray*}
\left[d_m^{k-1}\right]_i&=&\left[x_m^{k-1}\right]_i\left[A^T(b - A x_m^{k-1})\right]_i
+\sum_{j=\max\{0,m-\hat{m}\}}^{m-1}\beta_j^{(m-1)}\left[d_j^{k-1}\right]_i\\
&=&0+\sum_{j=\max\{0,m-\hat{m}\}}^{m-1}\beta_j^{(m-1)}\left[d_j^{k-1}\right]_i\neq 0\,.
\end{eqnarray*}
If, in particular, $[d_m^{k-1}]_i<0$, then the quantity $\balpha_{m}$ in (\ref{def_alpha_mod}) would be zero, and a restart would happen in the NN-FCGLS method. \cref{prop:break} is important to assure that Algorithm \ref{alg:NN-FCGLS} does not stagnate, i.e., at least one inner iteration can be computed during each outer iteration cycle, unless $\alpha_0$ in (\ref{alpha}) is zero. If $\alpha_0=0$, then $X^{(0)}A^T(b-A x_0^{k})=0$, and $x_0^{k}$ is a solution of the nonlinear system in (\ref{KKT}).}
The following property also holds:
\begin{proposition}\label{prop:zero}
Assume that $[x_0^{k-1}]_i=0$ for some $i=1,\dots,N$, and that $n_k$ iterations of FCGLS are performed at the $k$th outer cycle of Algorithm \ref{alg:NN-FCGLS}. Then $[x_m^{k-1}]_i=0$ for all $m=1,\dots,n_k$.
\end{proposition}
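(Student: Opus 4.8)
The plan is to prove a slightly stronger claim by induction on $m$, namely that \emph{both} $[x_m^{k-1}]_i=0$ and $[d_m^{k-1}]_i=0$ hold for every $m=0,1,\dots,n_k$. Tracking the search direction together with the iterate is the key design choice: since the solution update (step \ref{step:2} of \cref{alg:FCGLS}) reads $x_m^{k-1}=x_{m-1}^{k-1}+\balpha_{m-1}d_{m-1}^{k-1}$, controlling $[x_m^{k-1}]_i$ forces me to know that $[d_{m-1}^{k-1}]_i=0$, so the two quantities cannot be propagated separately.

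For the base case $m=0$ I would invoke the initialization in \cref{alg:NN-FCGLS}: the hypothesis gives $[x_0^{k-1}]_i=0$, and, exactly as in \cref{prop:break}, $d_0^{k-1}=X^{(0)}A^Tr_0^{k-1}$ yields $[d_0^{k-1}]_i=[x_0^{k-1}]_i[A^Tr_0^{k-1}]_i=0$. The engine of the inductive step is the componentwise consequence of the choice $L^{(m)}=X^{(m)}=\dg(x_m^{k-1})$ made at step \ref{step:3}: because $\bz_m^{k-1}=X^{(m)}z_m^{k-1}$ with $X^{(m)}$ diagonal, one has
\begin{equation*}
[\bz_m^{k-1}]_i=[x_m^{k-1}]_i\,[z_m^{k-1}]_i\,,
\end{equation*}
so that $[x_m^{k-1}]_i=0$ automatically forces $[\bz_m^{k-1}]_i=0$.

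In the inductive step I assume $[x_j^{k-1}]_i=0$ and $[d_j^{k-1}]_i=0$ for all $j\le m-1$. First the solution update gives $[x_m^{k-1}]_i=[x_{m-1}^{k-1}]_i+\balpha_{m-1}[d_{m-1}^{k-1}]_i=0$, where the value of $\balpha_{m-1}$ is immaterial since it multiplies a vanishing component. The displayed identity then yields $[\bz_m^{k-1}]_i=0$, and the direction update $d_m^{k-1}=\bz_m^{k-1}+\sum_j\beta_j^{(m-1)}d_j^{k-1}$ (step \ref{step:ind2}) gives $[d_m^{k-1}]_i=[\bz_m^{k-1}]_i+\sum_j\beta_j^{(m-1)}[d_j^{k-1}]_i=0$, every summand vanishing by the inductive hypothesis. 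This closes the induction and in particular establishes $[x_m^{k-1}]_i=0$ for all $m=1,\dots,n_k$.

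I expect no genuine analytic obstacle here; the only thing that requires care is recognizing that the statement about $x_m^{k-1}$ alone is not self-propagating and must be bundled with the companion fact $[d_m^{k-1}]_i=0$. The bounded step-length $\balpha_{m-1}$ enters only as a scalar and can be ignored, and the truncation range $\max\{0,m-\hat{m}\}$ in the direction update is harmless, since the inductive hypothesis covers every index $j$ appearing in the sum.
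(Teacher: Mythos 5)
Your proof is correct and follows essentially the same inductive argument as the paper: the diagonal preconditioner forces $[\bz_m^{k-1}]_i=[x_m^{k-1}]_i[z_m^{k-1}]_i=0$, and the inductive hypothesis annihilates the remaining terms $\sum_j\beta_j^{(m-1)}[d_j^{k-1}]_i$ of the direction update. The only difference is bookkeeping: you carry $[d_j^{k-1}]_i=0$ explicitly as part of the induction hypothesis, whereas the paper recovers it a posteriori from $[d_j^{k-1}]_i=\balpha_j^{-1}\left([x_{j+1}^{k-1}]_i-[x_j^{k-1}]_i\right)$, which requires $\balpha_j\neq0$; your version sidesteps that division and is marginally cleaner.
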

\begin{proof}
By induction. If $[x_0^{k-1}]_i=0$ for some $i=1,\dots,N$, then
\[
[x_1^{k-1}]_i = [x_0^{k-1}]_i+\balpha_0[d_0^{k-1}]_i=[x_0^{k-1}]_i+\balpha_0[x_0^{k-1}]_i[z_0^{k-1}]_i=0\,.
\]
In particular, $[d_0^{k-1}]_i=0$. Now assume that $[x_{2}^{k-1}]_i=0,\dots,[x_{\ell}^{k-1}]_i=0$ for $\ell<n_k$. This implies that $[d_{2}^{k-1}]_i=0,\dots,[d_{\ell-1}^{k-1}]_i=0$, since
\[
[d_{j}^{k-1}]_i=\balpha_{j}^{-1}\left([x_{j+1}^{k-1}]_i-[x_{j}^{k-1}]_i\right)=0\quad\mbox{and}\quad\balpha_{j}\neq0\quad\mbox{for}\quad 
j=0,\dots,\ell-1\,.
\]
Proving that $[x_{\ell+1}^{k-1}]_i=0$ is immediate, since 
\[
[x_{\ell+1}^{k-1}]_i = [x_{\ell}^{k-1}]_i+\balpha_{\ell}[d_{\ell}^{k-1}]_i=0+\balpha_\ell\left([x_{\ell}^{k-1}]_i[z_{\ell}^{k-1}]_i
+\sum_{j=\max\{0,\ell-\hat{m}\}}^{\ell-1}\beta_{j}^{(\ell-1)}[d_{j}^{k-1}]_i\right)=0\,.
\]
\end{proof}
The above result can be easily extended across the outer iterations, since $x_0^k=x_{n_k}^{k-1}$. Moreover, a similar property holds for the class of the MRNSD methods (this follows immediately from the update formula (\ref{MRNSD})), and it is important to guarantee that no oscillations occur around the newly-recovered zero components.

Similarly to Algorithm \ref{alg:FCGLS}, the computational cost of each iteration of Algorithm \ref{alg:NN-FCGLS} is dominated by a matrix-vector product with $A$, and a matrix-vector product with $A^T$; indeed, in the NN-FCGLS case, the cost of updating $L^{(m)}=X^{(m)}$, and computing a matrix-vector product with $X^{(m)}$ is negligible. Therefore, the cost of one iteration of NN-FCGLS is comparable to the cost of one iteration of CGLS, gradient projection (FISTA), and MRNSD. 

\section{Incorporating a covariance preconditioner}\label{sect:covMvar}

As explained in Section \ref{sect:survey}, when considering problems affected by both Gaussian and Poisson noise one has to deal with formulation (\ref{covNNLS}).
The KKT conditions associated with the constrained problem (\ref{covNNLS}) are still leveraged; similarly to (\ref{KKT}), they can be expressed as:
\begin{equation}\label{covKKT}
X(\covM^{-1/2}A)^T\covM^{-1/2}(Ax - \bbeta)=0\,,\;\mbox{where}\; X=\dg(x)\,,\; x\geq0\,,\; A^T\covM^{-1}(Ax-\bbeta)\geq 0\,.
\end{equation}
Analogously to the Gaussian noise case, the last condition is discarded, while at the $m$th step of an iterative solver for the nonlinear system in (\ref{covKKT}), the approximation $X\simeq X^{(m)}=:\dg(x_{m-1})$ is chosen, so that the iteration-dependent linear system
\begin{equation}\label{covLeftPrecLS}
X^{(m)}A^T\covM^{-1}(Ax - \bbeta)=0\,,\quad\mbox{with}\quad x_m\geq 0
\end{equation}
should be approximately solved. The linear system in (\ref{covLeftPrecLS}) has two preconditioners: while $X^{(m)}$ is updated at each iteration, at this stage $\covM$ is assumed to be fixed (and, therefore, approximation (\ref{covMfix}) is used). Because of the presence of $X^{(m)}$ and the constraint \linebreak[4]$x_m\geq0$, an iterative scheme such as NN-FCGLS must be used to approximate the solution of (\ref{covLeftPrecLS}). However, the NN-FCGLS as described in Algorithm \ref{alg:NN-FCGLS} should be reviewed in order to account for $\covM$, and the following derivations will lead to the Covariance-Preconditioned NN-FCGLS (CP-NN-FCGLS) method. The starting points for CP-NN-FCGLS are still the generic update formulas (\ref{updatex}) and (\ref{updated}), where the coefficients $\alpha_j^{(m-1)}$ and $\beta_j^{(m-1)}$, $j=0,\dots,m-1$ are set by imposing an optimality condition similar to (\ref{optRes}), i.e., $\partial\Phi_C(x_m)/\partial\alpha_i^{(m-1)}=0$, and by imposing the $\covM^{-1/2}Ad_i$ to be orthogonal, $i=0,\dots,m-1$. In particular, relation
\[
(\covM^{-1/2} r_m,\covM^{-1/2}Ad_i)=0,\quad i = 0,\dots,m-1
\]
holds and, with the same reasoning used in Section \ref{sect:flexi}, 
\[
\alpha_{m-1}^{(m-1)}=\frac{(\covM^{-1/2}r_{m-1},\covM^{-1/2}Ad_{m-1})}{(\covM^{-1/2}Ad_{m-1},\covM^{-1/2}Ad_{m-1})}=:\alpha_{m-1},\quad\alpha_j^{(m-1)}=0,\quad j=0,\dots,m-2\,,
\]
so that a short formula like (\ref{updatex_simple}) can be used to update the solution.  Concerning the vector $d_m$, by exploiting the orthogonality of $\covM^{-1/2}Ad_i$, $i=0,\dots,m-1$, the coefficients in (\ref{updated}) are computed as follows
\[
\beta_j^{(m-1)}=-\frac{(\covM^{-1/2}A\bz_m,\covM^{-1/2}Ad_j)}{(\covM^{-1/2}Ad_{j},\covM^{-1/2}Ad_{j})}\,,\quad
j=0,\dots,m-1\,,
\]
where $\bz_m=X^{(m)}A^T\covM^{-1}r_m=X^{(m)}A^T\covM^{-1}(\bbeta-Ax_m)$. In order to guarantee $x_m\geq0$ at the $m$th iteration, a bound analogous to (\ref{def_alpha_mod}) should be imposed and, similarly to NN-FCGLS, this could lead to stagnation. To overcome stagnation, an inner-outer iteration strategy is devised, which is based on restarts of the underlying FCGLS method. With the goal of  giving an alternative approximation for the covariance matrix $\covM$ defined in (\ref{GaussPoissPb}), when the $k$th restart happens one can choose to update the covariance in the following way:
\begin{equation}\label{covMres}
\covM^{(k)}=\dg(A x_{n_k}^{k-1}+\beta\bones+\sigma^2\bones)\,,
\end{equation}
where $x_{n_k}^{k-1}$ is the last approximation computed at the $(k-1)$th iteration cycle. In this way, a restart-dependent covariance matrix is defined, which is fixed during each inner iteration cycle. The previous derivations are summarized in Algorithm \ref{alg:CPNN-FCGLS}.
\begin{algorithm}
\caption{Covariance-Preconditioned NN-FCGLS (CP-NN-FCGLS) method}
\label{alg:CPNN-FCGLS}
\begin{algorithmic}
\State Input: $A$, $b$, $x_0^0\geq 0$, $\hat{m}$, $\mmaxin$, $\mmaxout$.\smallskip
\State For $k=1,\dots,$ till a stopping criterion is satisfied OR $k=\mmaxout$:\medskip
\begin{itemize}
\item Take $X^{(0)}=\dg(x_0^{k-1})$, and $\bar{\covM}=\covM$ as in (\ref{covMfix}) OR $\bar{\covM}=\covM^{(k)}$ as in (\ref{covMres});
\item[]$r_0^{k-1} = b- A x_0^{k-1}$, $\br_0^{k-1} = \bar{\covM}^{-1/2}r_0^{k-1}$, 
$\bz_0^{k-1}=X^{(0)} A^T\bar{\covM}^{-1/2}\br_0^{k-1}$, 
\item[]$d_0^{k-1}=\bz_0^{k-1}$, $w_0^{k-1} = A \bz_0^{k-1}$, and $\bar{w}_0^{k-1} = \bar{\covM}^{-1/2}w_0^{k-1}$.\smallskip
\item For $m=1,\dots$ till $\balpha_{m-1}=0$, OR $m=\mmaxin$, OR a stopping criterion is satisfied:\smallskip
\begin{itemize}
\item Set $\alpha_{m-1}=(\br_{m-1}^{k-1}, \bar{w}_{m-1}^{k-1})/(\bar{w}_{m-1}^{k-1},\bar{w}_{m-1}^{k-1})$ and take $\balpha_{m-1}$ as in (\ref{def_alpha_mod}).
\item Update $x_{m}^{k-1} = x_{m-1}^{k-1} + \balpha_{m-1} d_{m-1}^{k-1}$.
\item Update $X^{(m)}$.
\item Update $r_{m}^{k-1} = r_{m-1}^{k-1} - \balpha_{m-1} w_{m-1}^{k-1}$ and 
$\br_{m}^{k-1}=\bar{\covM}^{-1/2}r_m^{k-1}$.
\item Compute $\bz_{m}^{k-1} = X^{(m)}A^T\bar{\covM}^{-1/2}\br_{m}^{k-1}$.
\item Compute $A \bz_m^{k-1}$.
\item For $j=\max\{0,m-\hat{m}\},\dots,m-1$,
\item[]set $\beta_{j}^{(m-1)}=-(\bar{\covM}^{-1/2}A \bz_{m}^{k-1},\bar{w}_j^{k-1})/(\bar{w}_j^{k-1},\bar{w}_j^{k-1})$.
\item Compute $d_{m}^{k-1} = \bz_{m}^{k-1} + \sum_{j=\max\{0,m-\hat{m}\}}^{m-1}\beta_j^{(m-1)}d_j^{k-1}$.
\item Update $w_{m}^{k-1}= A d_m^{k-1} = A\bz_{m}^{k-1} + \sum_{j=\max\{0,m-\hat{m}\}}^{m-1}\beta_j^{(m-1)}w_j^{k-1}$,
\item[]and $\bar{w}_m^{k-1}=\bar{\covM}^{-1/2}w_m^{k-1}$
\end{itemize}
\item  Let $n_k$ be the stopping iteration. Take $x_0^{k}=x_{n_k}^{k-1}$.
\end{itemize}
\end{algorithmic}
\end{algorithm}
Like Algorithm \ref{alg:NN-FCGLS}, the computational cost of one iteration of Algorithm \ref{alg:CPNN-FCGLS} is dominated by one matrix-vector products with $A$ and one matrix-vector product with $A^T$; the cost of updating $X^{(m)}$, $\covM^{(k)}$, and $(\covM^{(k)})^{-1/2}$, and performing matrix-vector multiplications with them, is negligible.

\section{Numerical experiments}\label{sect:NumExp}

This section proposes three examples concerned with imaging problems. The first and the second ones are realistic astronomical image restoration test problems. The third one is an image reconstruction test problem that simulates an acquisition by computerized tomography with parallel beams. All the tests are performed running MATLAB R2015a on a single processor 2.2 GHz Intel Core i7. The image restoration test data are available within the toolbox \cite{RestTools}. In both Examples 1 and 2 the exact and perturbed images are of size $256\times 256$ pixels, and the blurring matrix $A$ is of size $65536\times 65536$. $A$ is not available explicitly but, using the software in \cite{RestTools}, matrix-vector products with $A$ and $A^T$ are computed recurring to the point spread function, which defines the blur. The tomography test problem is available in \cite{AIRT}, as \texttt{paralleltomo}. In Example 3 the Shepp-Logan phantom of size $256\times 256$ pixels is used as exact image, and the parameters modeling the acquisition process are the angles where the sources are located, the number of rays for each angle, and the distance between the first and the last ray. The sparse sensing matrix $A$ is of size ${M\times 65536}$, where $M$ is varied by considering different acquisition parameters.

The new NN-FCGLS method is compared with 
other state-of-the-art solvers for the nonnegatively constrained linear least squares problems (\ref{NNLS}) or (\ref{covNNLS}): almost all of them are among the ones surveyed in Section \ref{sect:survey}, whose acronyms are reported in \cref{tab:acron}. In particular, two versions of FISTA are considered: the basic one introduced in \cite{FISTA}, without backtracking (for the choice of the stepsize), and the monotonic version of FISTA (MFISTA), with backtracking, described in \cite{MFISTA}. Indeed, the performance of (M)FISTA is very much dependent on the stepsize $t$. According to the theory, one should take $t=1/(2\sigma_1^2)$, where $\sigma_1$ is the largest singular value of $A$; in practice, $t$ might be extremely expensive to compute: this is the case for all the test problems considered in this section. When testing FISTA, an approximation of $\sigma_1$ is obtained by running a few iterations of the Golub-Kahan bidiagonalization algorithm \cite{GK65}: in the following examples, 5 iterations are considered, so that the computational cost of this process is dominated by 5 matrix-vector products with $A$, and 5 matrix-vector products with $A^T$. When testing MFISTA, sometimes the notation MFISTA($1/t$) is used to emphasize the chosen stepsize; MFISTA stands for MFISTA($2\sigma_1^2$).

For each of the following examples, a table reporting the behavior of the different solvers is displayed: the second column (labeled \tql rel.error\tqr) reports the minimum relative error
\begin{equation}\label{RelErr}
\frac{\|\xex-x_{m}\|_2}{\|\xex\|_2}
\end{equation}
attained by each method, 
where the iteration number $m$ is displayed in the third column (labeled \tql iterations\tqr). The fourth column (labeled \tql tot.time\tqr) reports the total time to perform $m$ iterations, while the last column (labeled \tql av.time\tqr) shows the average time per iteration. The time is measured in seconds. All the values are averages over 10 runs of each test problem, with different noise realizations. For each test problem, the graphs show the error history, i.e., the values of the relative errors (\ref{RelErr}) at each iteration versus the number of iterations. Concerning NN-FCGLS, the two stopping criteria
\begin{equation}\label{stopC}
\frac{\left\vert\|r_{m-1}\|_2-\|r_{m}\|_2\right\vert}{\|r_{m-1}\|_2}<\tau\,,
\quad\mbox{or}\quad \frac{\|r_{m-1}\|_2}{\|b\|_2}<\theta\weps
\end{equation}
are taken into account. The first one monitors the stabilization of the residual norms, i.e., the iterative process terminates as soon as the relative difference between the residual norm of two consecutive iterates drops below a prescribed tolerance $\tau>0$; note that the absolute value must be considered when the recurrences (\ref{updated}) are truncated, as the property $\|r_{m}\|_2\leq\|r_{m-1}\|_2$ is not guaranteed anymore. The second one is the well-known discrepancy principle that, in the Gaussian noise case, can be applied only if a good estimate of the norm of the noise $\|\eta\|_2$ is known; the scalar $\theta$ is a safety factor ($\theta=1.01$ is set in the following examples), while the scalar $\weps$ is the noise level, defined as $\weps=\|\eta\|_2/\|b^{ex}\|_2$. In order to produce the graphs of the error history, NN-FCGLS runs once for each stopping criterion, and special markers are used to emphasize the iterations satisfying each stopping criterion. Note that, in the following experiments, some additional iterations are typically performed after the stoping criterion is satisfied, in order to assess the stability of the method. Special markers also highlight the restarting iterations (recall that a restart happens as soon as a maximum number of inner iterations $\mmaxin$ is performed, or $\bar{\alpha}_m$ in (\ref{def_alpha_mod}) is zero).

\textbf{Example 1.} The first experiment is concerned with the deblurring and denoising of the so-called \texttt{star\_cluster} test image of size $256\times 256$ pixels \cite{RestTools}. The matrix $A$ models a spatially variant blur, consisting of 25 locally spatially invariant point spread functions. Gaussian noise $\eta$ of level $\weps=10^{-2}$ 
is added to the blurred image $b^{ex}$. The exact image $\xex$ and the perturbed data $b$ are among the ones displayed in \cref{fig:Img_StClust}. The parameter $\mmaxin$ of Algorithm \ref{alg:NN-FCGLS} is set to 20, and the initial guess $x_0^0$ is the projection of $b$ onto the nonnegative orthant, while the parameter $\tau$ appearing in (\ref{stopC}) is set to $10^{-4}$; the untruncated version of NN-FCGLS ($\hat{m}=\mmaxin$) is implemented.
For this problem, also a \tql naive\tqr\ version of a \tql nonnegative CGLS\tqr\ method (dubbed \tql naive NNCG\tqr) is considered. Naive NNCG is obtained by simply projecting the approximation (\ref{updatex_simple}) onto the nonnegative orthant at each iteration: recalling the remarks in Section \ref{sect:survey}, the success of this strategy is not guaranteed. 
\cref{tab:Img_StClust} summarizes the performance of the different solvers, which are stopped after 400 (total) iterations. The initial guess for all the solvers is the projection of $b$ onto the nonnegative orthant.
\begin{table}
\footnotesize
\centering
\caption{\emph{\texttt{star\_cluster}} test problem, with $\widetilde{\eps}=10^{-2}$. Average values over $10$ runs of the test problem, with different noise realizations.}\label{tab:Img_StClust}
\begin{tabular}{|lcccc|}
\hline
 & {\textbf{rel.error}} & {\textbf{iterations}} & {\textbf{tot.time}} & {\textbf{av.time}}\\\hline
{\textbf{NN-FCGLS}} & 2.8132e-03 & 248.67 & 62.56 & 0.25\\ \hline
{\textbf{ReSt NNCG}} & 5.3699e-03 & 261.00 & 113.51 & 0.43\\ \hline
{\textbf{FISTA}} & 9.1283e-02 & 72.00 & 42.06 & 0.58\\ \hline
{\textbf{MFISTA}} & 3.2803e-03 & 400.00 & 216.11 & 0.54\\ \hline
{\textbf{MFISTA(0.2)}} & 3.2445e-03 & 400.00 & 194.78 & 0.49\\ \hline
{\textbf{MFISTA(5)}} & 4.2834e-03 & 400.00 & 185.22 & 0.46\\ \hline
{\textbf{MRNSD}} & 1.9889e-02 & 400.00 & 91.11 & 0.23\\ \hline
{\textbf{NNSD}} & 8.3206e-02 & 400.00 & 91.59 &  0.23\\ \hline
{\textbf{naive NNCG}} & 1.4028e-01 & 400.00 & 105.02 & 0.26 \\ \hline
\end{tabular}
\end{table}
\begin{figure}[htbp]
\centering
\includegraphics[width=14.0cm]{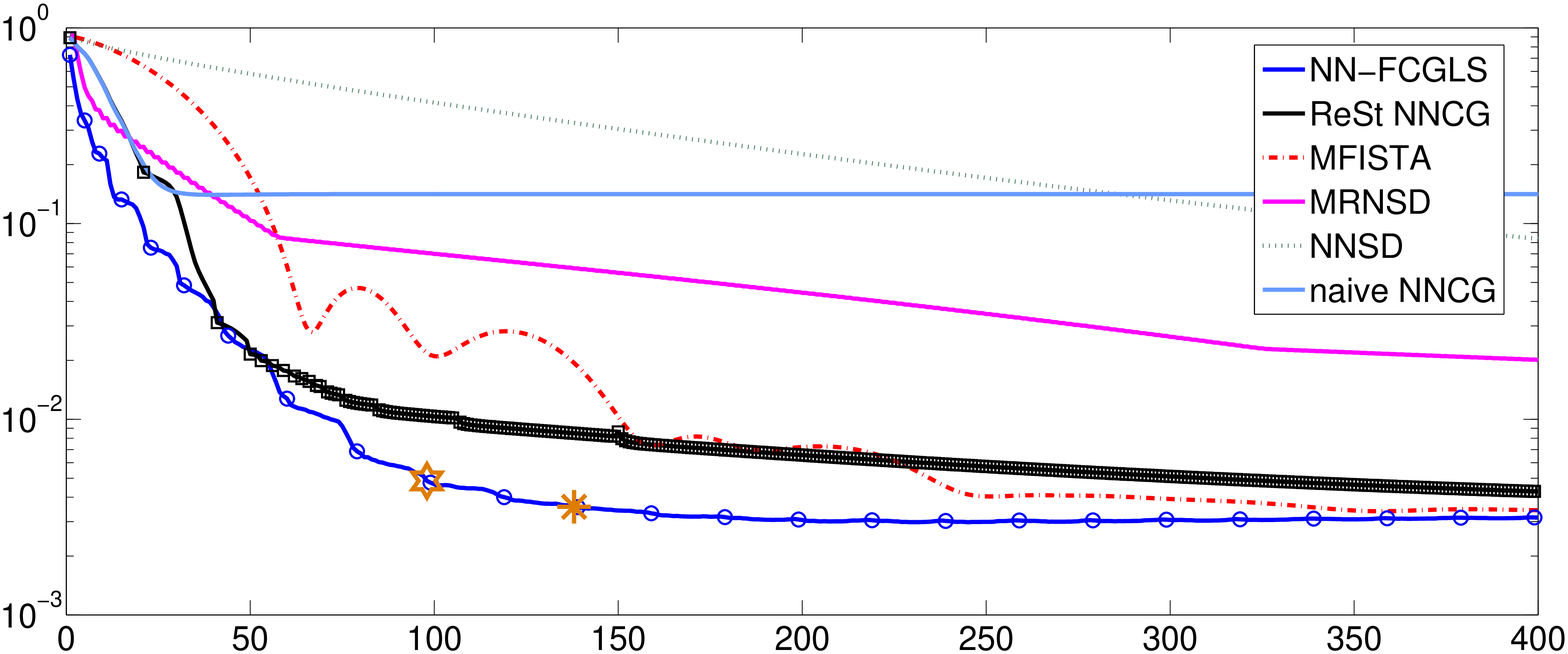}
\caption{\emph{\texttt{star\_cluster}} test problem, with $\weps=10^{-2}$. History of the relative errors. For the NN-FCGLS and ReSt CG methods, a small marker is used to emphasize the iterations where restarts happen. The big markers for NN-FCGLS highlight the stopping iterations.}
\label{fig:RelErr_StClust}
\end{figure}
\begin{figure}[htbp]
\centering
\begin{tabular}{cc}
\hspace{-0.1cm}{\small \textbf{(a)}} & {\small \textbf{(b)}}\\ 
\hspace{-0.1cm}\includegraphics[width=6.2cm]{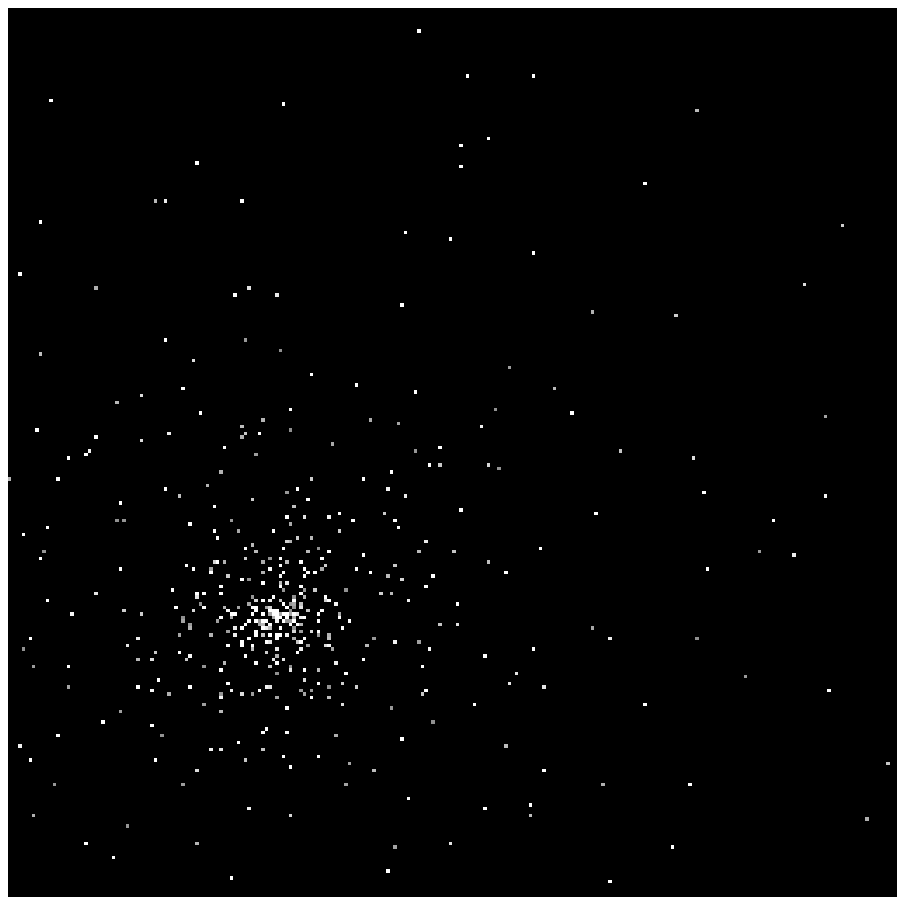} & 
\includegraphics[width=6.2cm]{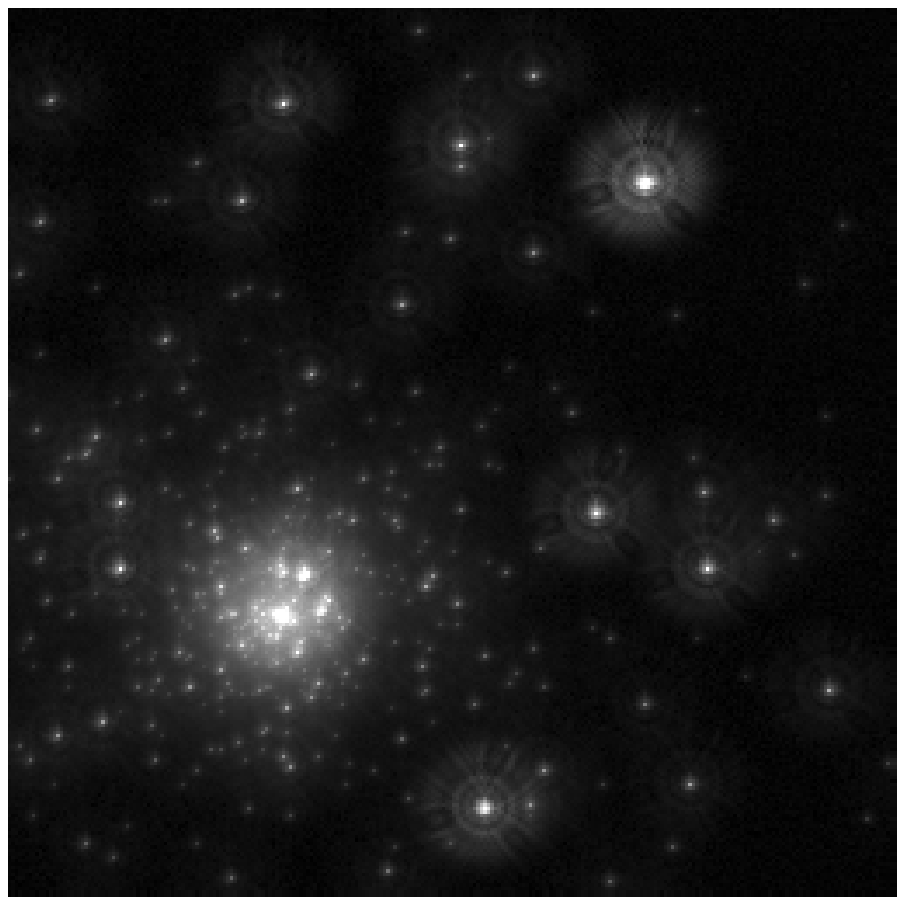}\\
\hspace{-0.1cm}{\small \textbf{(c)}} & {\small \textbf{(d)}}\\ 
\hspace{-0.1cm}\includegraphics[width=6.2cm]{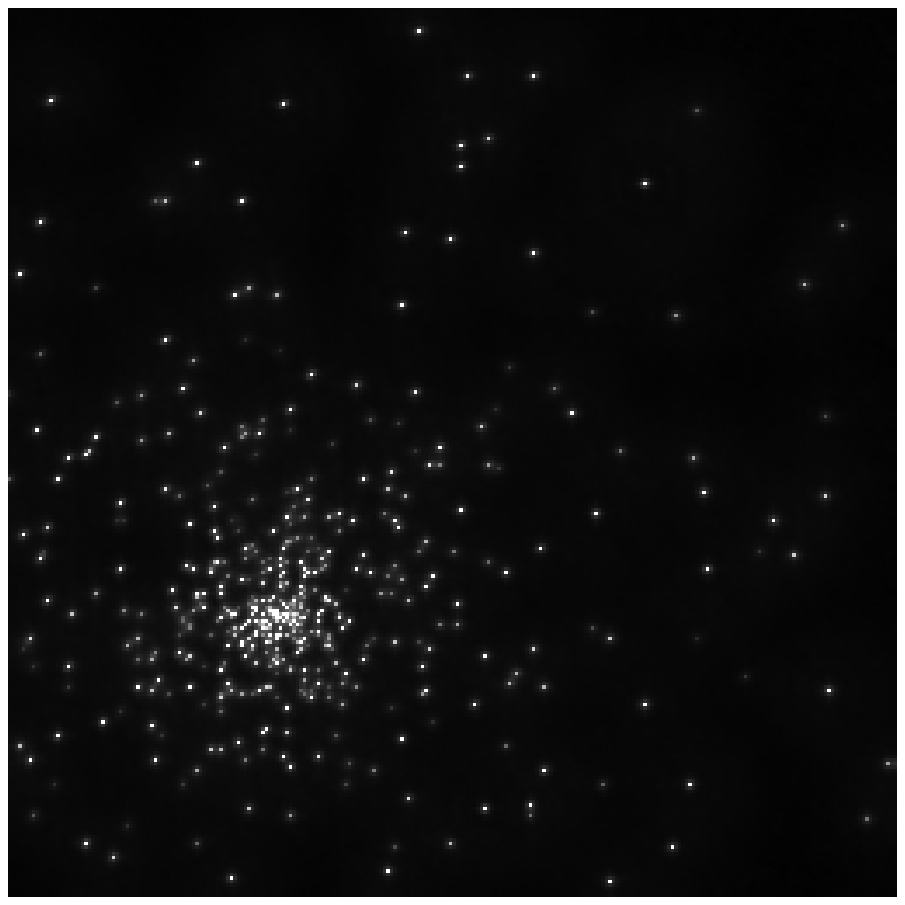} & 
\includegraphics[width=6.2cm]{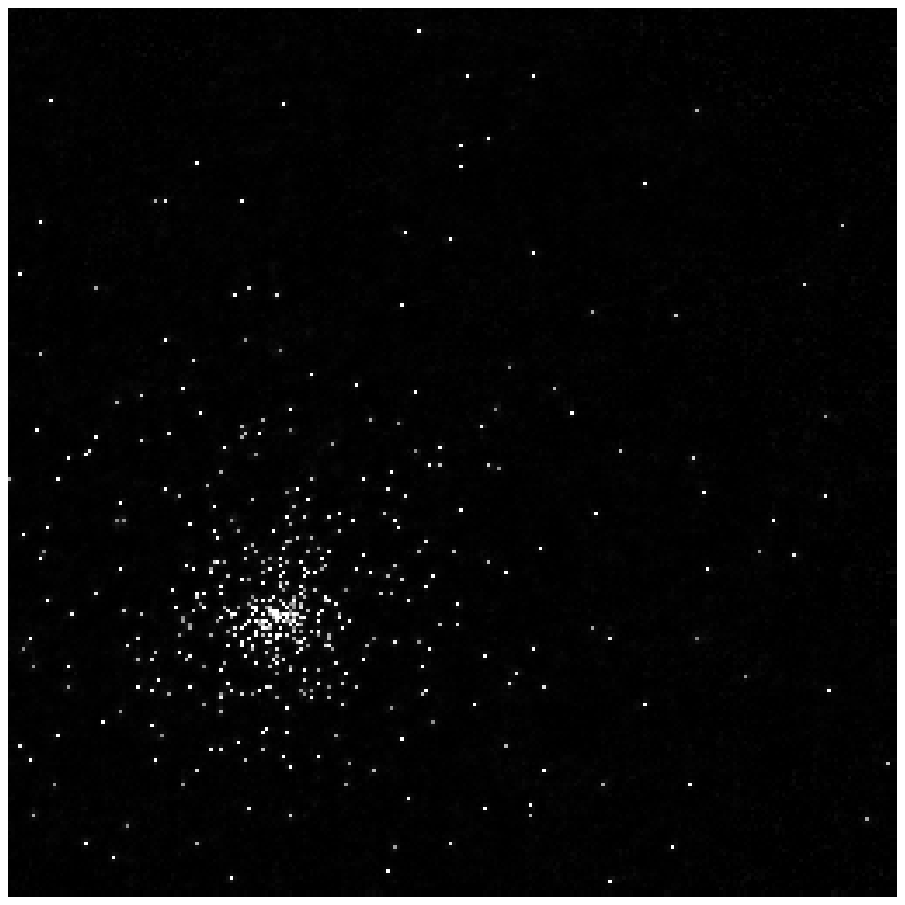}\\
\hspace{-0.1cm}{\small \textbf{(e)}} & {\small \textbf{(f)}}\\ 
\hspace{-0.1cm}\includegraphics[width=6.2cm]{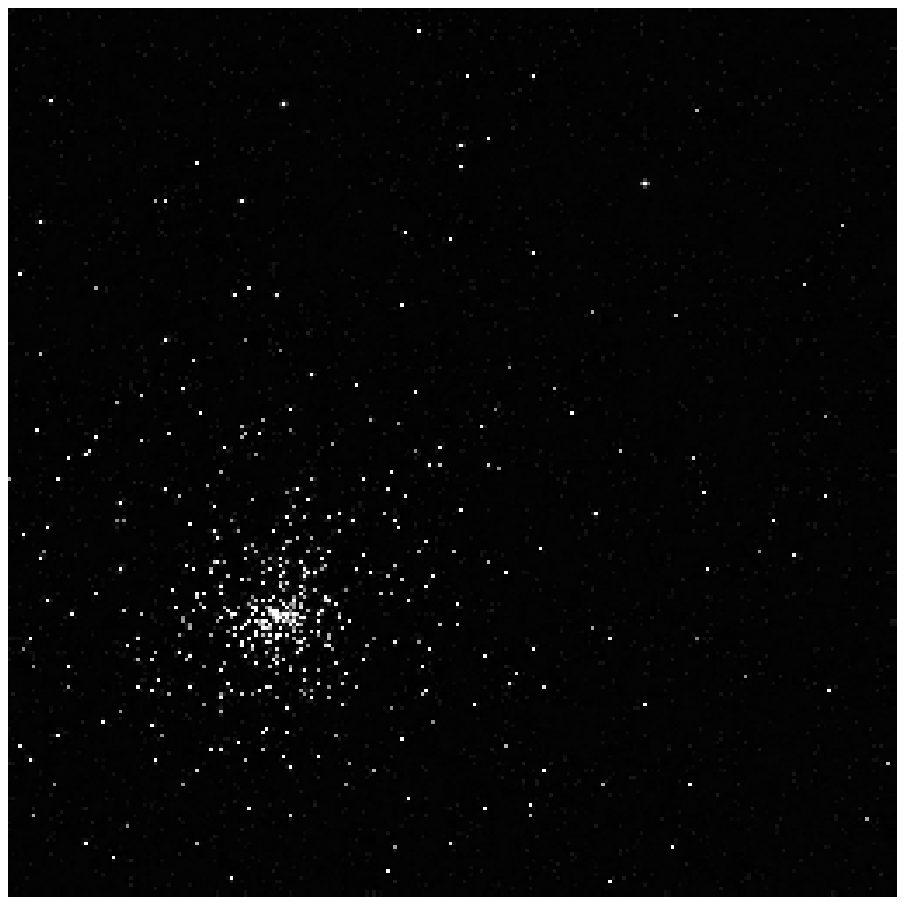} & 
\includegraphics[width=6.2cm]{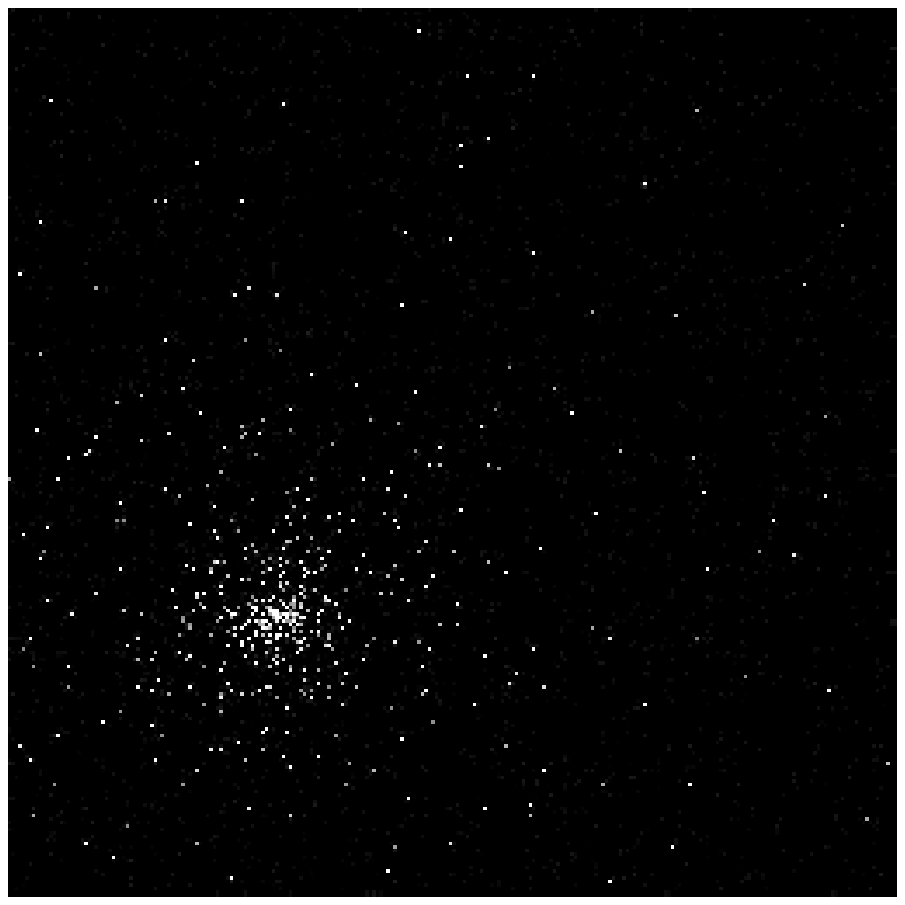}
\end{tabular}\\
\begin{tabular}{cccc}
\hspace{-0.6cm}{\small \textbf{(a')}} & \hspace{-0.6cm}{\small \textbf{(c')}} & \hspace{-0.6cm}{\small \textbf{(d')}} & \hspace{-0.6cm}{\small \textbf{(f')}}\\
\hspace{-0.6cm}\includegraphics[width=4.0cm]{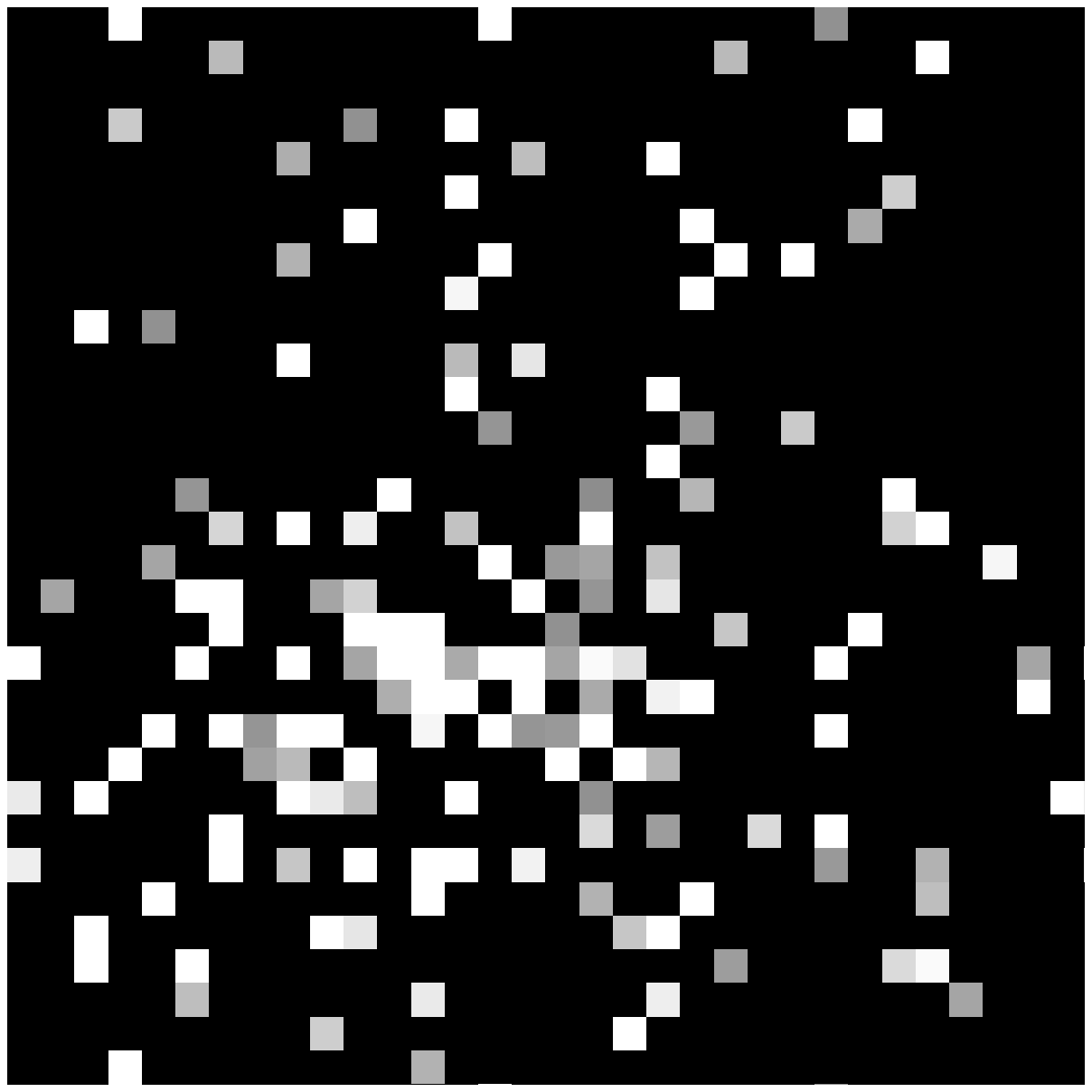} & 
\hspace{-0.6cm}\includegraphics[width=4.0cm]{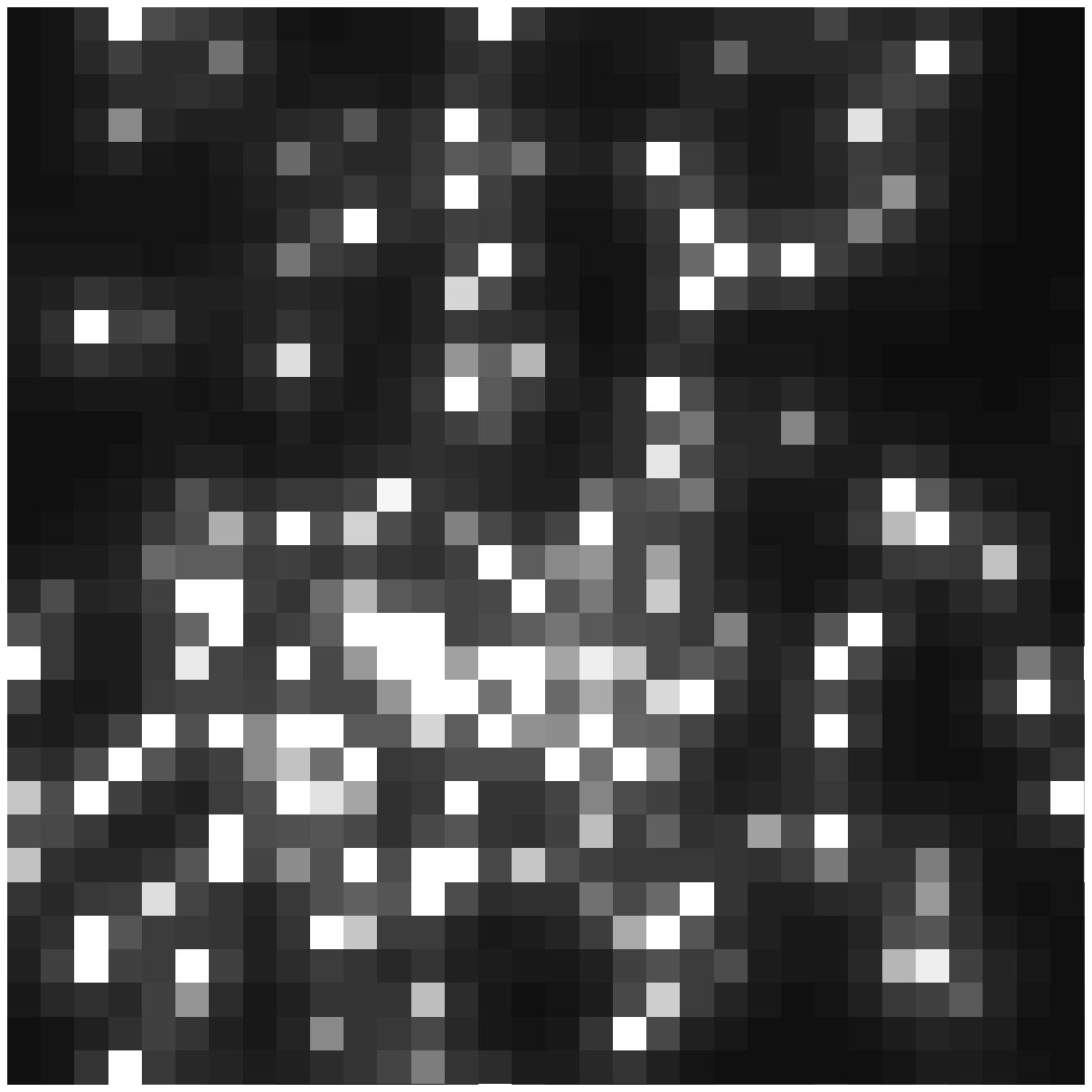} & 
\hspace{-0.6cm}\includegraphics[width=4.0cm]{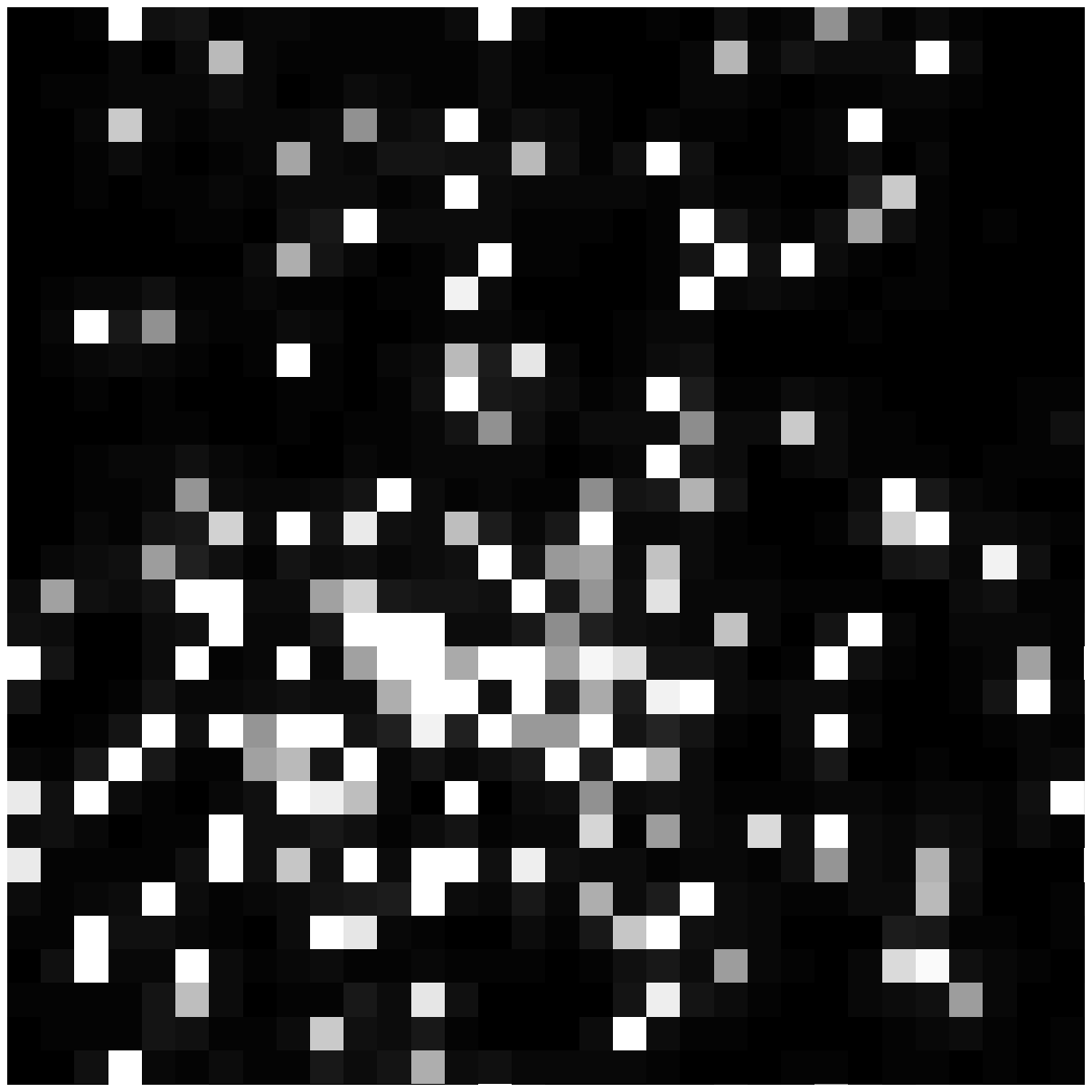} & 
\hspace{-0.6cm}\includegraphics[width=4.0cm]{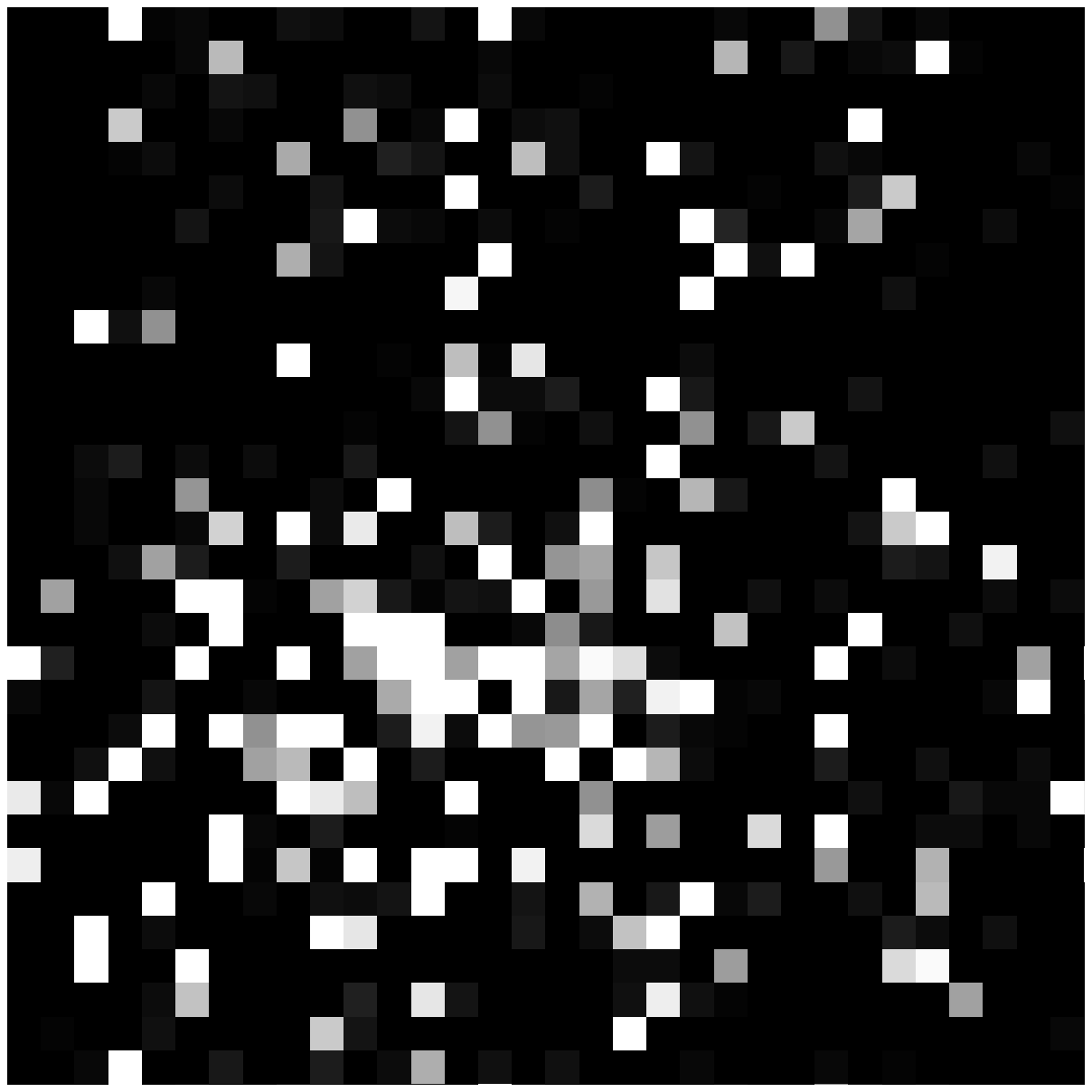}
\end{tabular}
\caption{Images related to the \emph{\texttt{star\_cluster}} test problem, with $\weps=10^{-2}$. Relative errors are reported within parentheses.\textbf{(a)} Exact image. \textbf{(b)} Blurred and noisy image. Restorations obtained at the $200$th iteration of the: \textbf{(c)} MRNSD method $(0.0489)$, \textbf{(d)} NN-FCGLS method $(0.0029)$, \textbf{(e)} ReSt NNCG method $(0.0067)$, and \textbf{(f)} MFISTA method $(0.0070)$. \textbf{(a')}, \textbf{(c')}, \textbf{(d')}, \textbf{(f')}: blow-up of a portion of the images displayed in \textbf{(a)}, \textbf{(c)}, \textbf{(d)}, \textbf{(f)}, respectively.}
\label{fig:Img_StClust}
\end{figure}
\cref{fig:RelErr_StClust} displays the error history for a single run of various methods. The NN-FCGLS iteration satisfying the first criterion in (\ref{stopC}) is marked by an asterisk, while the NN-FCGLS iteration satisfying the second criterion in (\ref{stopC}) is marked by an hexagram. 
The stopping criterion provided in \cite{CLRS} for the ReSt NNCG method is based on the discrepancy principle, and it would prescribe to stop at the 78th total iteration: one can clearly see that immediate restarts happen after this iteration. The qualitative reason behind this phenomenon is that the CGLS residual (i.e., the discrepancy) stabilizes after a few outer iterations, even when incorporating the nonnegative initial guess: at this point, after just one inner iteration, the method is restarted and the decrease in the relative error is slower (with a rate comparable to the NNSD method). The opposite phenomenon occurs for the restarts of the NN-FCGLS method: indeed, restarts are more frequent during the early iterations, i.e., when {there are more changes in the} preconditioning matrix $X^{(m)}$ in (\ref{LeftPrecLS}). One can also clearly see that the behavior of the relative errors of MFISTA is not monotonic at all, as only the residuals $\|r_m\|_2$ are required to decrease monotonically. Moreover, a slight \tql semi-convergence\tqr\ can be detected in the later iterations of NN-FCGLS: a similar phenomenon would appear also for the other solvers, provided that some more iterations are performed. Looking at \cref{tab:Img_StClust}, it should be also remarked that applying MFISTA($1/t$) with an initial large stepsize $t$ results in a faster convergence, i.e., a lower relative error can be attained with the same number of iterations. However, most likely, additional (inner) steps should be performed for backtracking, so that the time per iteration and the overall computational time might be higher than MFISTA($1/t$) with a small $t$. Moreover, though the relative errors delivered by NN-FCGLS and MFISTA are comparable, the latter requires a larger number of iterations, and each iteration is on average slower than any NN-FCGLS iteration (this is clear looking at \cref{fig:RelErr_StClust}). 
\cref{fig:Img_StClust} shows the exact and acquired images, and some restorations obtained at the 200th iteration of various methods. The bottom images in \cref{fig:Img_StClust} are blow-ups of some of the upper images, and it is evident that the image computed by the NN-FCGLS method has less ringing artifacts around the white dots (stars), though the restoration computed by MFISTA has similar quality. In this experiment, as well as some of the following ones, the reconstructed images are displayed after performing a fixed number of iterations, so that the different progress of each method can be visually compared.

\textbf{Example 2.} The second set of experiments deals with the deblurring and denoising of the well-known \texttt{satellite} test image of size $256\times 256$ pixels \cite{RestTools}. Two different matrices $A$ are taken into account, which model spatially invariant atmospheric blur. In the first case, only Gaussian noise of level $\weps=10^{-1}$ is added, while in the second case both Gaussian and Poisson noise of total level around $\weps=1.5\cdot 10^{-2}$ is added. 
In both cases, the parameter $\mmaxin$ of Algorithm \ref{alg:NN-FCGLS} is set to 20, while the parameter $\tau$ appearing in (\ref{stopC}) is set to $10^{-4}$. The first tests consider only Gaussian noise. The left frame of \cref{fig:NN-FCGLS_sat} displays the effect of different initial guesses $x_0^0$ on the relative errors computed by the NN-FCGLS method, with full recurrences for the updates of $d_m^{k-1}$. The vectors $x_0^0$ used for this experiment are the projection of the data vector $b$ onto the nonnegative orthant (denoted by $b_0$), and the projection of the vector $A^Tb$ onto the nonnegative orthant (denoted by $(A^Tb)_0$); also the option $x_0^0=\bzeros$ is considered and, to avoid immediate stagnation of NN-FCGLS, the identity matrix of order $N$ is set as first \tql preconditioner\tqr\ $X^{(0)}$ in (\ref{LeftPrecLS}). The right frame of \cref{fig:NN-FCGLS_sat} considers the effect of different values of the truncation parameter $\hat{m}$ on the relative errors computed by the NN-FCGLS method, with $x_0^0=b_0$. When updating the descent direction $d_m^{k-1}$ in Algorithm \ref{alg:NN-FCGLS} (see also steps \ref{step:ind1} and \ref{step:ind2} of Algorithm \ref{alg:FCGLS}), the following values are used: $\hat{m}=\mmaxin$ (corresponding to a full recurrence), $\hat{m}=1$ (corresponding to a CGLS-like recurrence), and $\hat{m}=5$. \cref{tab:NN-FCGLS_sat} summarizes the results obtained running these first experiments: more precisely, the values of the minimum relative error (\ref{RelErr}), and the corresponding $m$ are reported.
\begin{figure}[htbp] 
\centering
\begin{tabular}{cc}
\hspace{-0.4cm}{\small \textbf{(a)}} & \hspace{-0.7cm}{\small \textbf{(b)}}\\ 
\hspace{-0.4cm}\includegraphics[width=7.8cm]{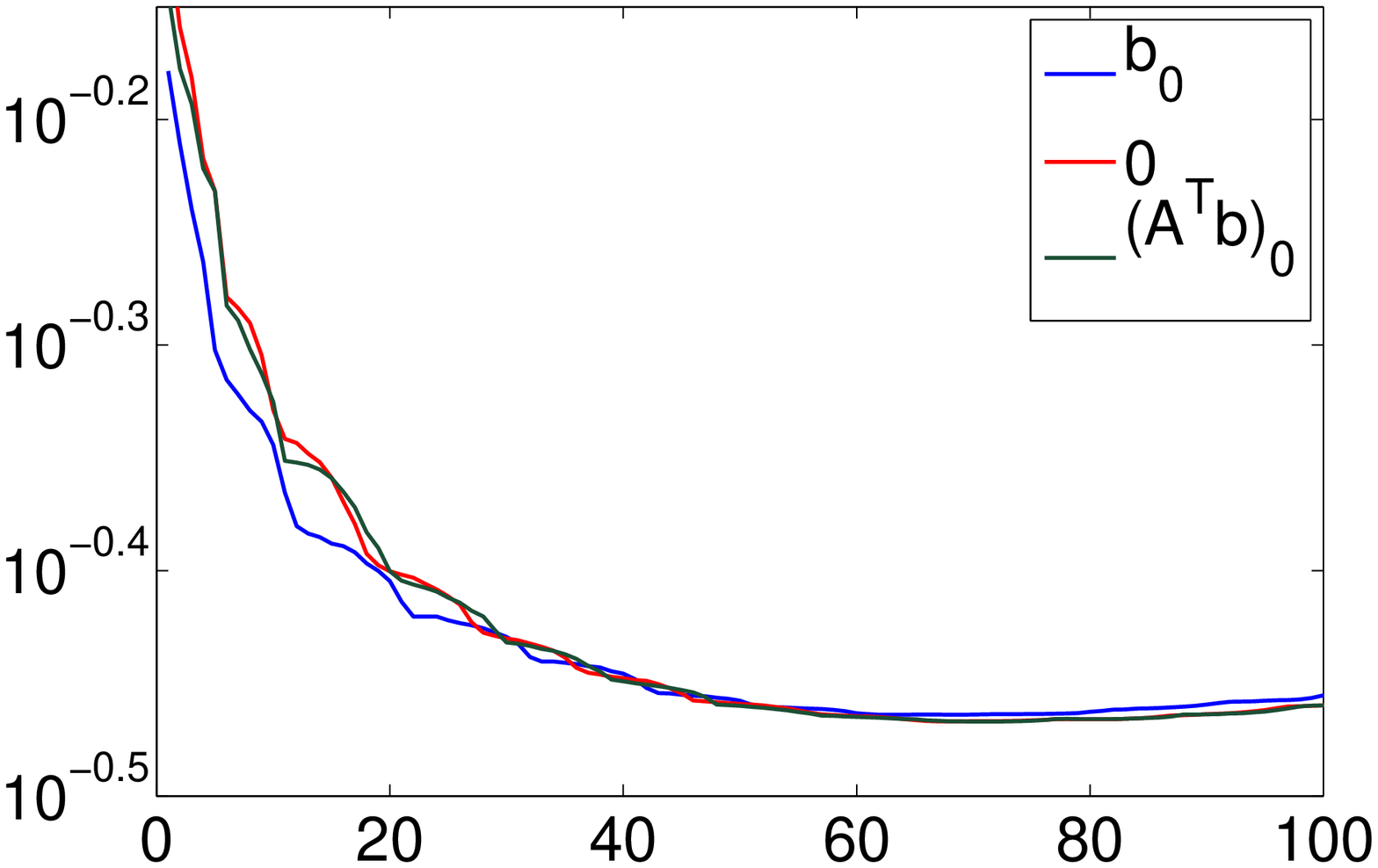} & 
\hspace{-0.7cm}\includegraphics[width=7.8cm]{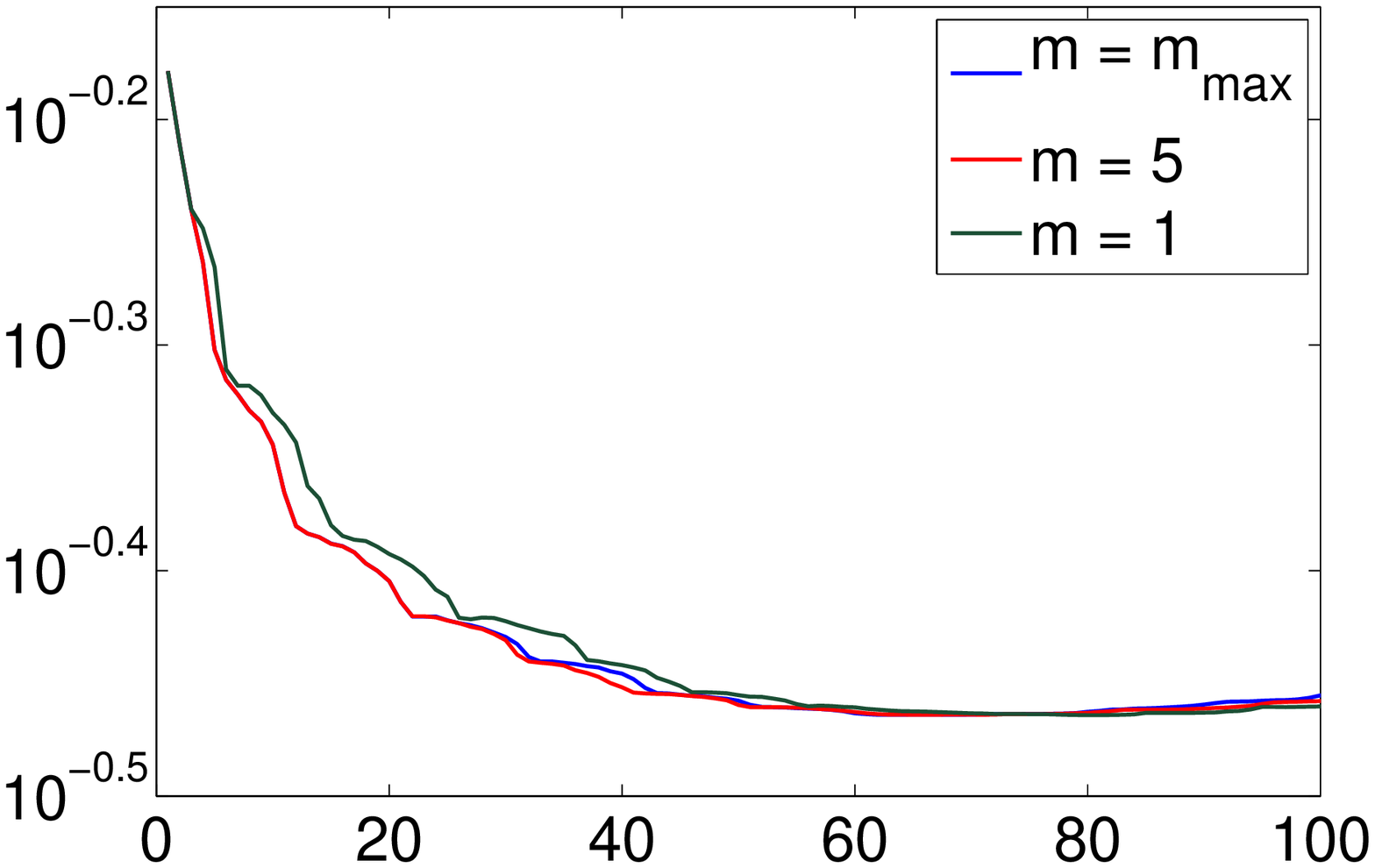}
\end{tabular}
\caption{\emph{\texttt{satellite}} test problem, with Gaussian noise of level $\widetilde{\eps}=10^{-1}$. History of the relative errors of the NN-FCGLS method, varying the initial guess $x_0^0$ (frame \textbf{(a)}), and varying the truncation parameter $\hat{m}$ for the update of $d_m^{k-1}$ (frame \textbf{(b)}).}
\label{fig:NN-FCGLS_sat}
\end{figure}
\begin{table}
\footnotesize
\centering
\caption{\emph{\texttt{satellite}} test problem, with Gaussian noise of level $\widetilde{\eps}=10^{-1}$. Minimum relative error achieved by running NN-FCGLS with different combinations of $x_0^0$ and $\hat{m}$.}
\label{tab:NN-FCGLS_sat}
\begin{tabular}{|cccc|}
\hline
 {\textbf{rel.error}} & {\textbf{iterations}} & {\textbf{$x_0^0$}} & {\textbf{$\hat{m}$}}\\\hline
3.4363e-01 & 63 & $b_0$ & $\mmaxin$\\ \hline
3.4131e-01 & 68 & $\bzeros$ & $\mmaxin$\\ \hline
3.4121e-01 & 72 & $(A^Tb)_0$ & $\mmaxin$\\ \hline
3.4363e-01 & 65 & $b_0$ & 5\\ \hline
3.4347e-01 & 80 & $b_0$ & 1\\ \hline
\end{tabular}
\end{table}
Looking at \cref{tab:NN-FCGLS_sat}, one can conclude that NN-FCGLS is very robust with respect to both the choices of $x_0^0$ and $\hat{m}$. The effect of different choices of $x_0^0$ is mostly evident during the early iterations, when $x_0^0=b_0$ seems to outperform the other options (this is quite common when considering image deblurring problems). Moreover, though $\hat{m}=\mmaxin$ should be chosen according to the theory of FCGLS, also lower values of $\hat{m}$ can deliver results of the same quality, except for the early iterations. In the following tests performed with the \texttt{satellite} test image, the choices $x_0^0=b_0$ and $\hat{m}=\mmaxin$ will be considered. The vector $b_0$ is taken as initial guess for all the solvers. 

The next tests compare NN-FCGLS with the ReSt NNCG, (M)FISTA($1/t$), MRNSD, and NNSD methods. Following the suggestions in \cite{BerNagy13, NS00}, also a preconditioned version of MRNSD (dubbed \tql PMRNSD\tqr) is taken into account. A special (fixed) preconditioner for PMRNSD is computed at the beginning of the iterations by exploiting an approximation of the singular value decomposition of $A$ by means of the fast Fourier transform. The smaller approximate singular values are set to one, so that they are not inverted when applying preconditioning.
\cref{tab:Sat} summarizes the average results over 10 runs of the \texttt{satellite} test problem, with different realizations of the of the random noise. All the methods are stopped after 200 (total) iterations. 
\cref{fig:RelErr_Sat} displays the history of the relative errors for some of the methods considered in \cref{tab:Sat}; for this test, the NN-FCGLS iterations satisfying the first and the second stopping criteria in (\ref{stopC}) coincide, and they are highlighted by big markers. The \tql semi-convergence\tqr\ phenomenon is evident for both the PMRNSD and NN-FCGLS methods, which are the fastest solvers for this test problem. 
\begin{figure}[htbp]
\centering
\includegraphics[width=14.0cm]{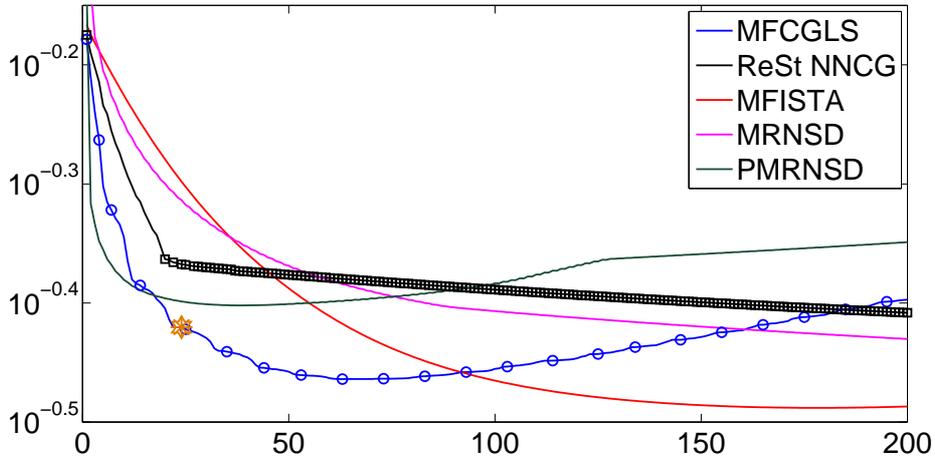}
\caption{\emph{\texttt{satellite}} test problem, with Gaussian noise of level $\widetilde{\eps}=10^{-1}$. History of the relative errors. For the NN-FCGLS and ReSt CG methods, a small marker is used to emphasize the iterations where restarts happen. The big markers for NN-FCGLS highlight the stopping iterations (which coincide, in this case).}
\label{fig:RelErr_Sat}
\end{figure}
\begin{table}
\footnotesize
\centering
\caption{\emph{\texttt{satellite}} test problem, with Gaussian noise of level $\widetilde{\eps}=10^{-1}$. Average values over ${10}$ runs of the test problem, with different noise realizations.}
\label{tab:Sat}
\begin{tabular}{|lcccc|}
\hline
 & {\textbf{rel.error}} & {\textbf{iterations}} & {\textbf{tot.time}} & {\textbf{av.time}}\\\hline
{\textbf{NN-FCGLS}} & 3.5098e-01 & 70.33 & 5.49 & 0.08\\ \hline
{\textbf{ReSt NNCG}} & 4.0957e-01 & 106.67 & 9.38 & 0.08\\ \hline
{\textbf{FISTA}} & 3.2969e-01 & 164.33 & 21.22 & 0.12\\ \hline
{\textbf{MFISTA}} & 3.2583e-01 & 177.00 & 23.10 & 0.13\\ \hline
{\textbf{MFISTA(0.2)}} & 3.3318e-01 & 137.00 & 20.58 & 0.15\\ \hline
{\textbf{MFISTA(5)}} & 3.3397e-01 & 200.00 & 26.86 & 0.13\\ \hline
{\textbf{MRNSD}} & 3.7720e-01 & 200.00 & 12.55 & 0.06\\ \hline
{\textbf{PMRNSD}} & 4.0032e-01 & 37.33 & 2.62 & 0.07\\ \hline
{\textbf{NNSD}} & 4.3095e-01 & 200.00 & 13.82 & 0.07\\ \hline
\end{tabular}
\end{table}
\begin{figure}[htbp]
\centering
\begin{tabular}{ccc}
\hspace{-0.9cm}{\small \textbf{(a)}} & \hspace{-0.9cm}{\small \textbf{(b)}} & \hspace{-0.9cm}{\small \textbf{(c)}} \\ 
\hspace{-0.9cm}\includegraphics[width=5.6cm]{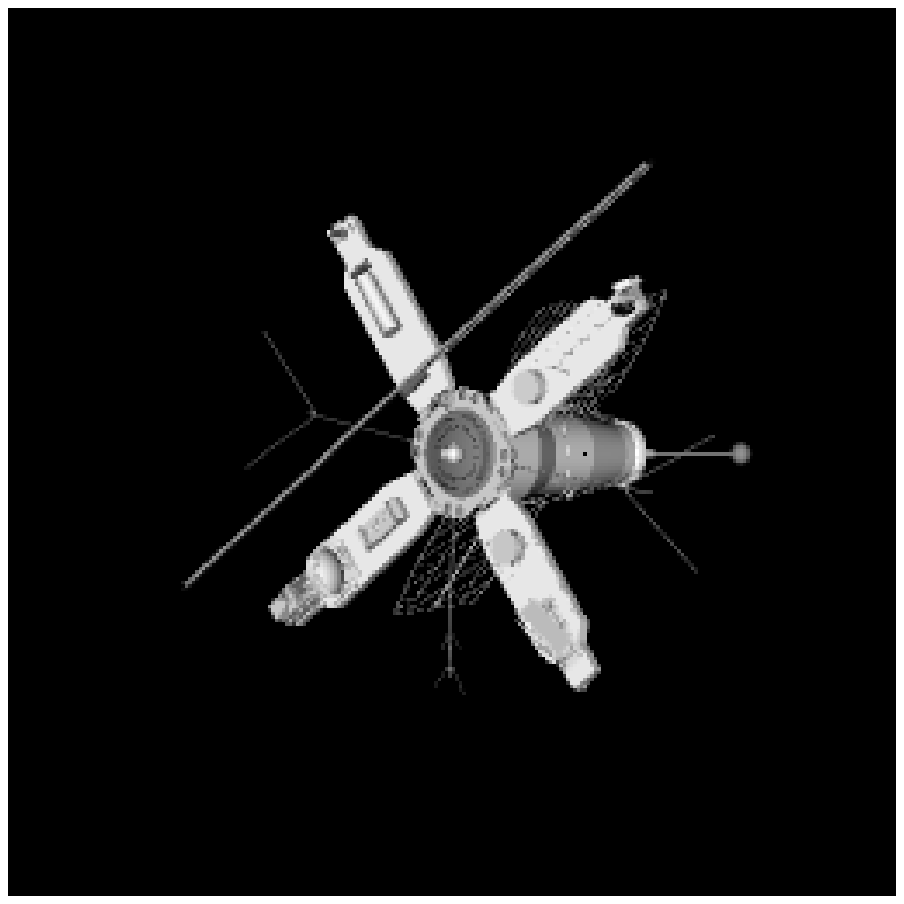} & 
\hspace{-0.9cm}\includegraphics[width=5.6cm]{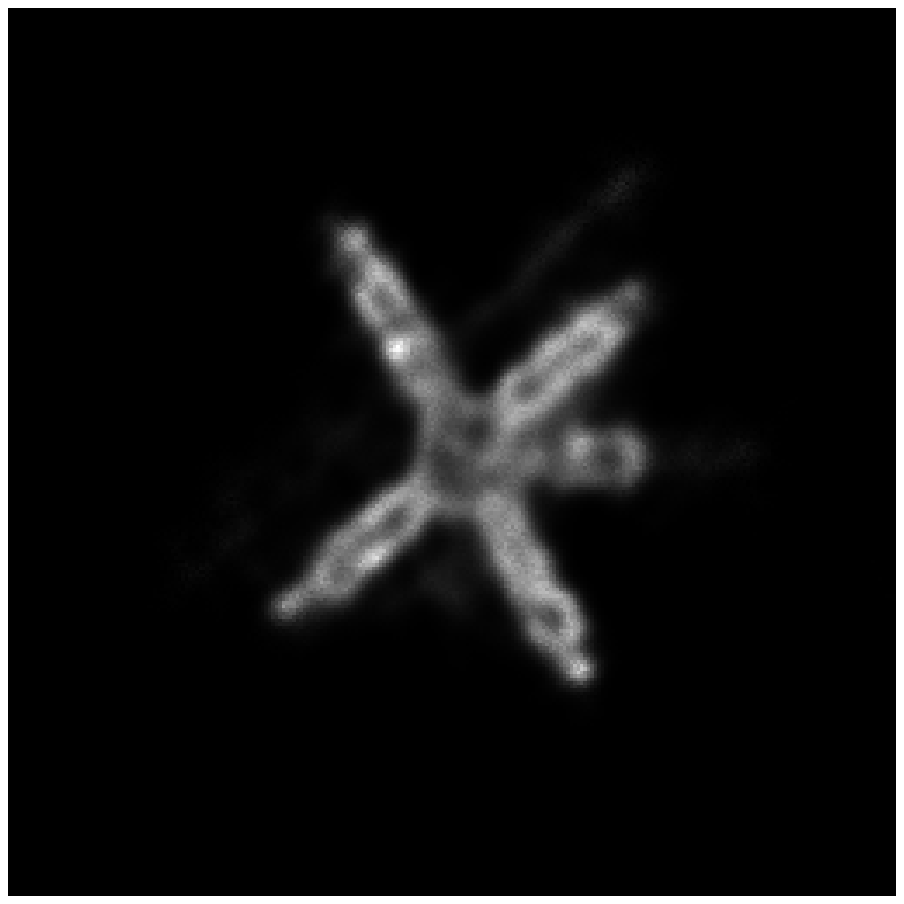} & 
\hspace{-0.9cm}\includegraphics[width=5.6cm]{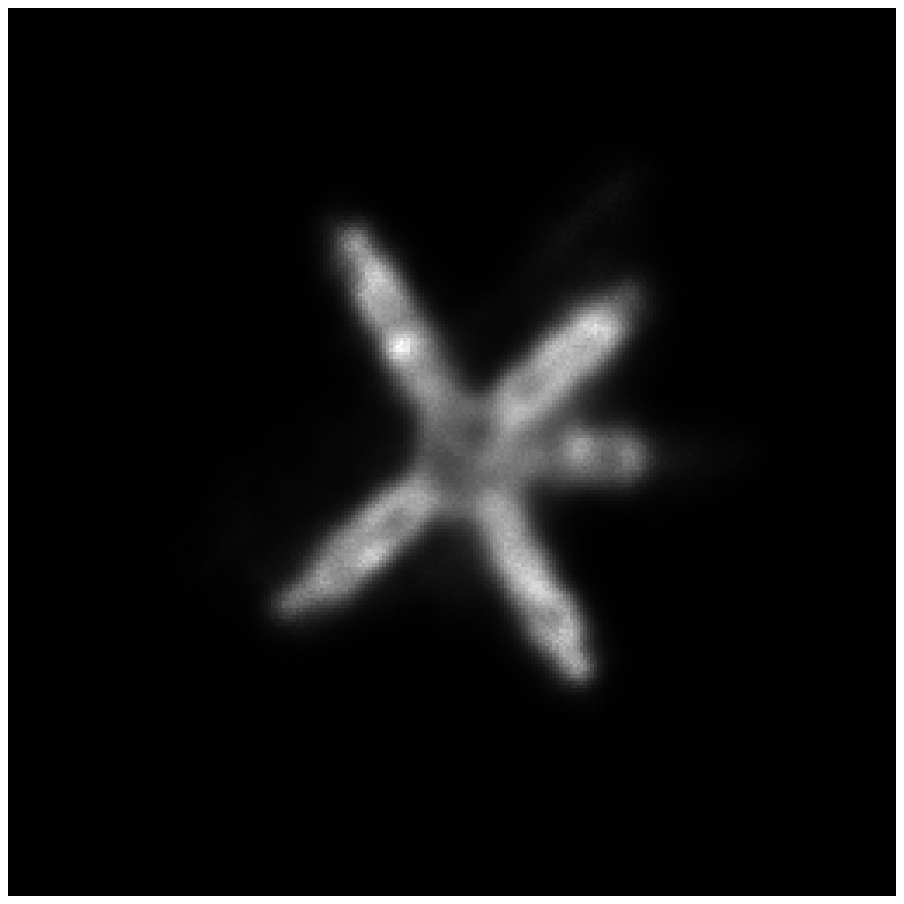} \\
\hspace{-0.9cm}{\small \textbf{(d)}} & \hspace{-0.9cm}{\small \textbf{(e)}} & \hspace{-0.9cm}{\small \textbf{(f)}} \\ 
\hspace{-0.9cm}\includegraphics[width=5.6cm]{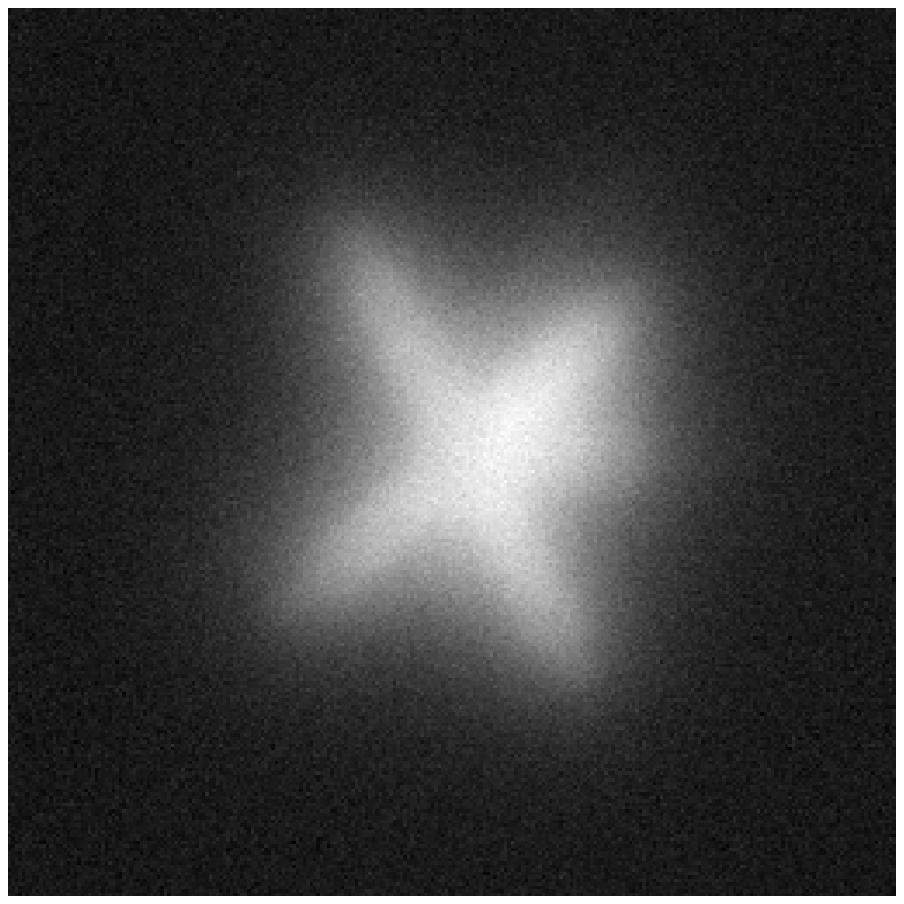} & 
\hspace{-0.9cm}\includegraphics[width=5.6cm]{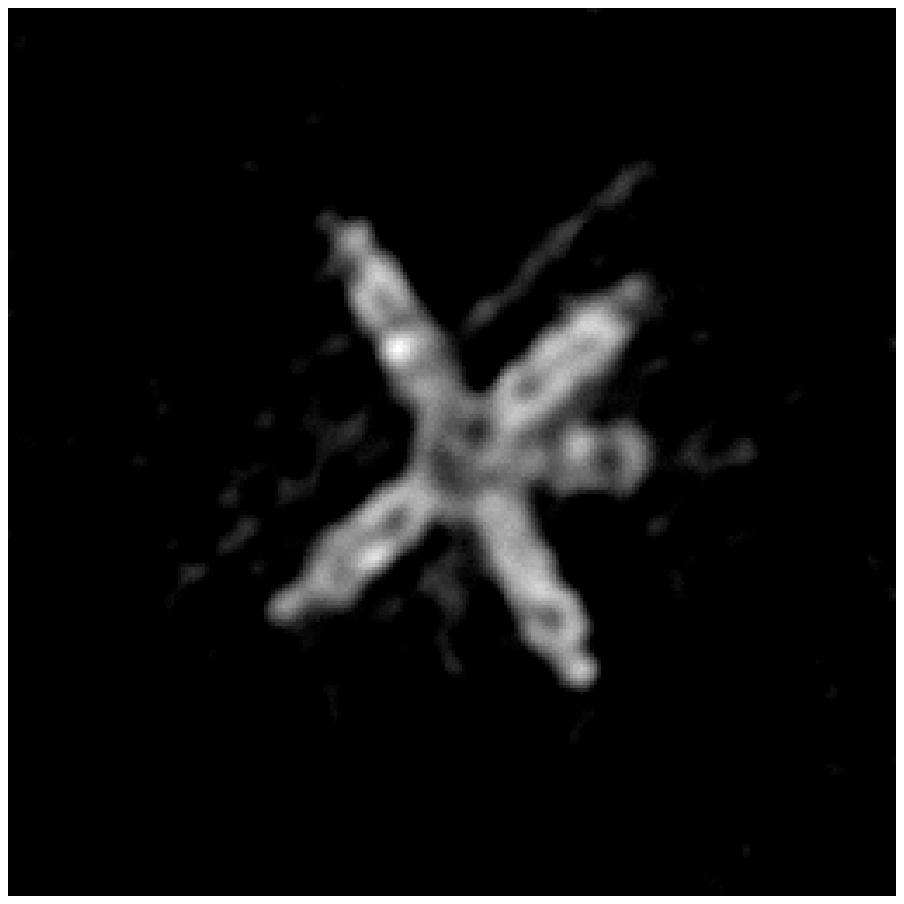} & 
\hspace{-0.9cm}\includegraphics[width=5.6cm]{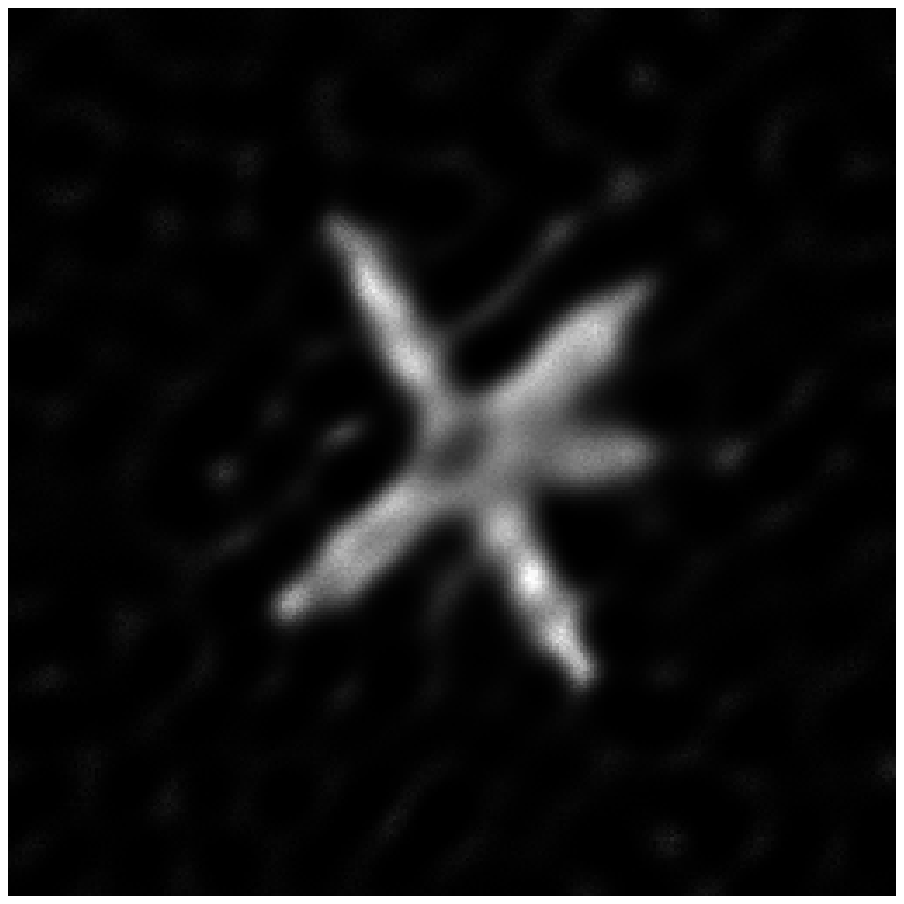}
\end{tabular}%
\caption{Images related to the \emph{\texttt{satellite}} test problem, with Gaussian noise of level $\widetilde{\eps}=10^{-1}$. Relative errors are reported within parentheses. \textbf{(a)} Exact image. \textbf{(b)} Restoration by NN-FCGLS, iteration \# $63$ $(0.3436)$. \textbf{(c)} Restoration by MRNSD, iteration \# $200$ $(0.3711)$. \textbf{(d)} Blurred and noisy image. \textbf{(e)} Restoration by MFISTA, iteration \# $200$ $(0.3259)$. \textbf{(f)} Restoration by PMRNSD, iteration \# $38$ $(0.3962)$.}
\label{fig:Img_Sat}
\end{figure}
\begin{table}
\footnotesize
\centering
\caption{\emph{\texttt{satellite}} test problem, with both Gaussian and Poisson noise of level around $\weps=1.5\cdot 10^{-2}$. Average values over $10$ runs of the test problem, with different noise realizations.}
\label{tab:SatPoiss}
\begin{tabular}{|lcccc|}
\hline
 & {\textbf{rel.error}} & {\textbf{iterations}} & {\textbf{tot.time}} & {\textbf{av.time}}\\\hline
{\textbf{CP-NN-FCGLS}} & 1.2785e-01 & 300.00 & 31.65 & 0.08\\ \hline
{\textbf{CP-NN-FCGLS($k$)}} & 1.2778e-01 & 300.00 & 32.17 & 0.08\\ \hline
{\textbf{WMRNSD}} & 1.8201e-01 & 300.00 & 28.34 & 0.09\\ \hline
{\textbf{KWMRNSD}} & 1.3590e-01 & 300.00 & 37.19 & 0.12\\ \hline
\end{tabular}
\end{table}
\begin{figure}[htbp]
\centering
\includegraphics[width=14.0cm]{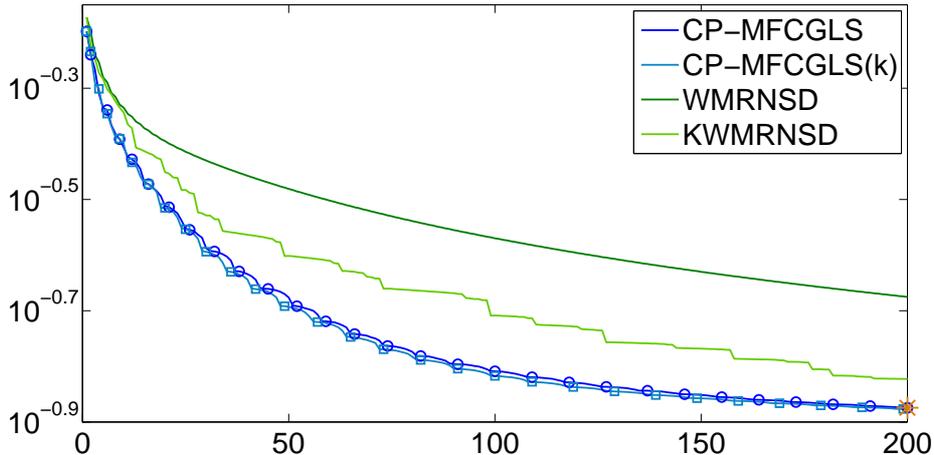}
\caption{\emph{\texttt{satellite}} test problem, with Gaussian and Poisson noise of level around $\weps=1.5\cdot 10^{-2}$. History of the relative errors. For the CP-NN-FCGLS and CP-NN-FCGLS($k$) methods, a small marker is used to emphasize the iterations where restarts happen. The big mark highlights the iteration satisfying the first stopping criterion for CP-NN-FCGLS in (\ref{stopC}): in this case, the maximum number of total iterations is reached.}
\label{fig:RelErr_SatPoiss}
\end{figure}
\begin{figure}[htbp]
\centering
\begin{tabular}{cc}
\hspace{-0.6cm}{\small \textbf{(a)}} & \hspace{-0.6cm}{\small \textbf{(b)}}\\ 
\hspace{-0.6cm}\includegraphics[width=6.2cm]{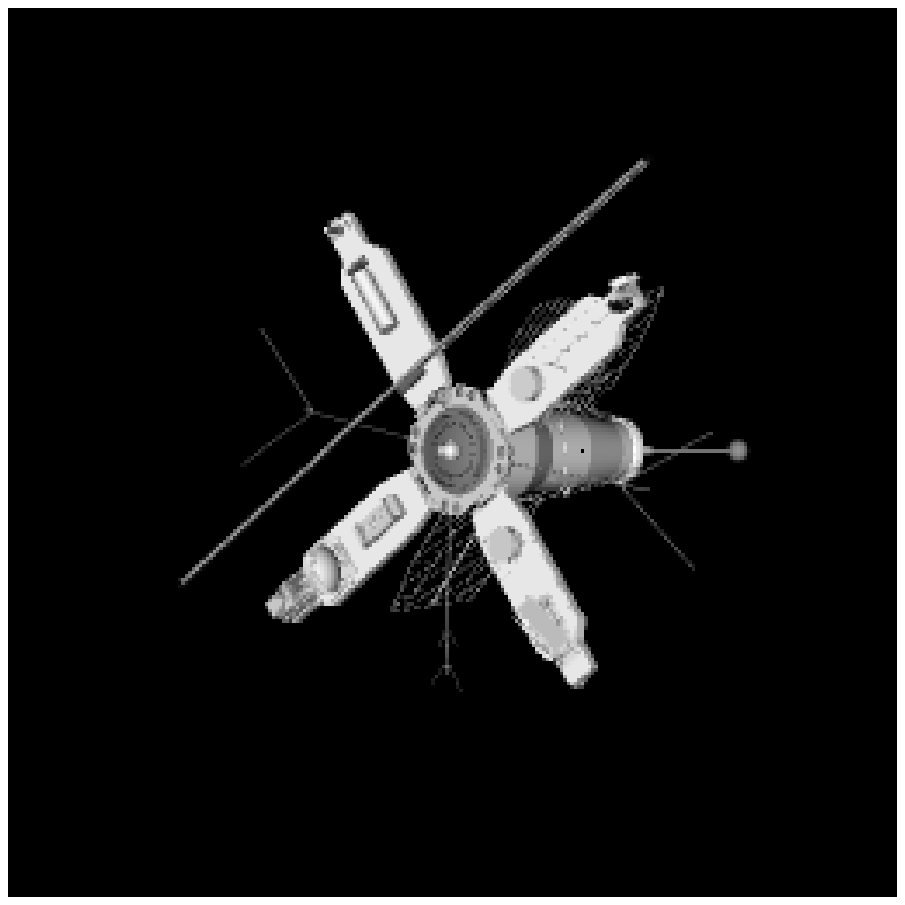} & 
\hspace{-0.6cm}\includegraphics[width=6.2cm]{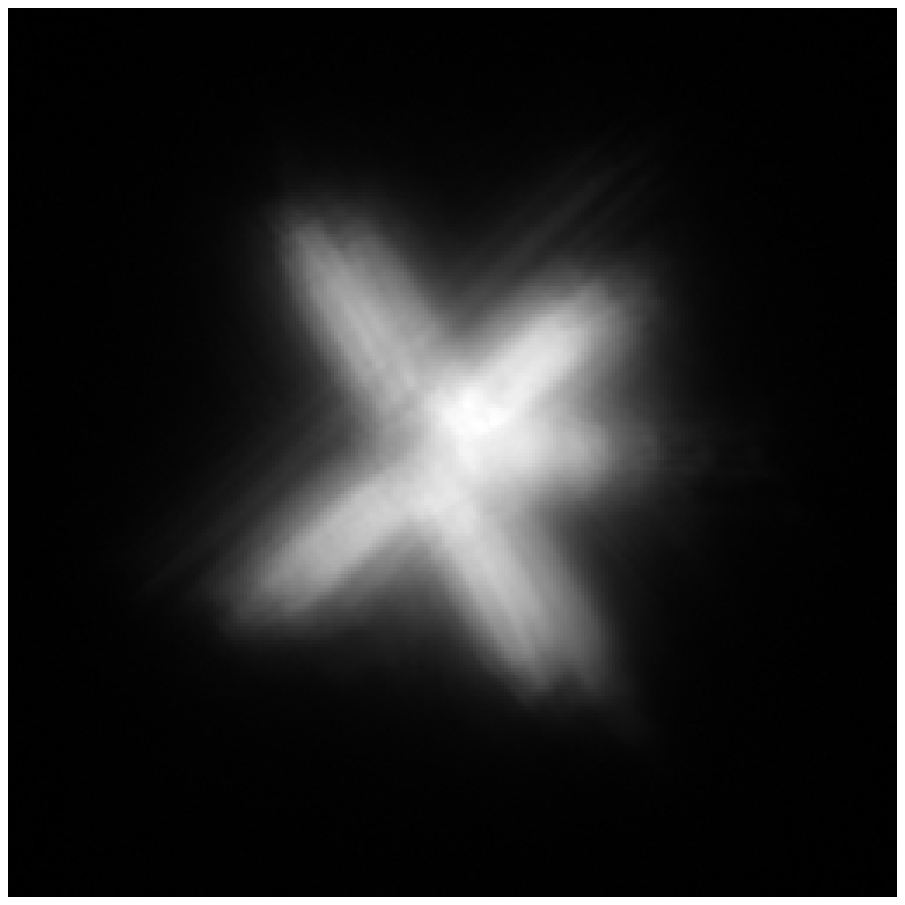}\\
\hspace{-0.6cm}{\small \textbf{(c)}} & \hspace{-0.6cm}{\small \textbf{(d)}}\\ 
\hspace{-0.6cm}\includegraphics[width=6.2cm]{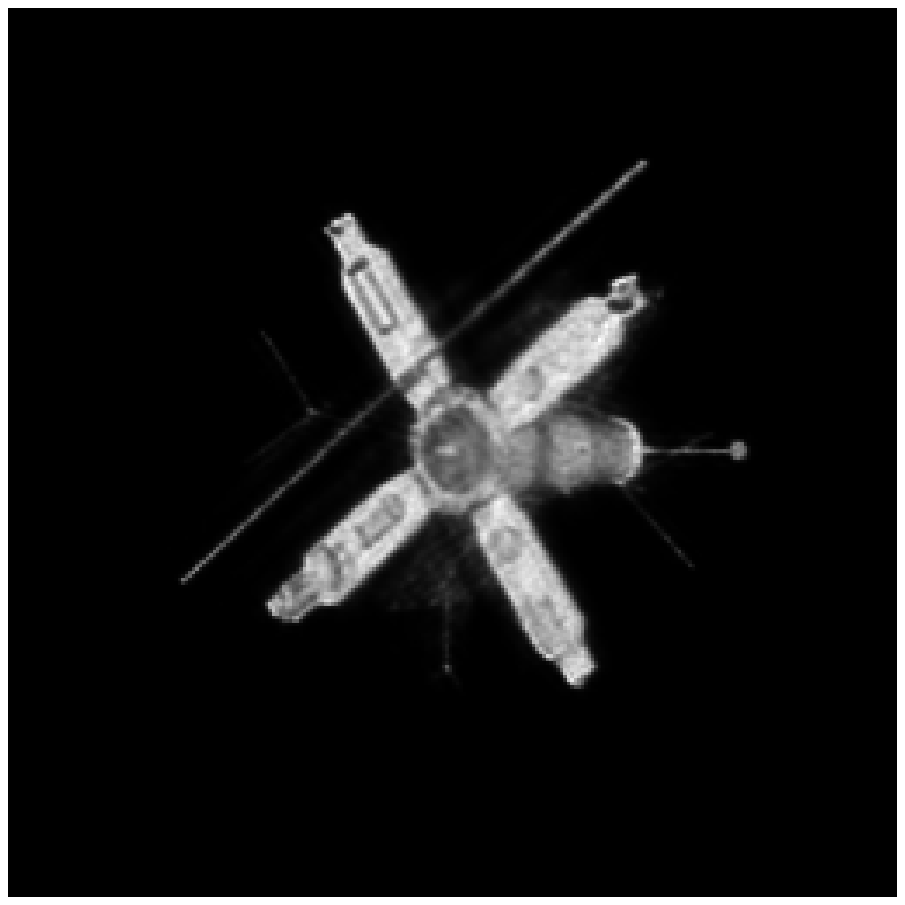} & 
\hspace{-0.6cm}\includegraphics[width=6.2cm]{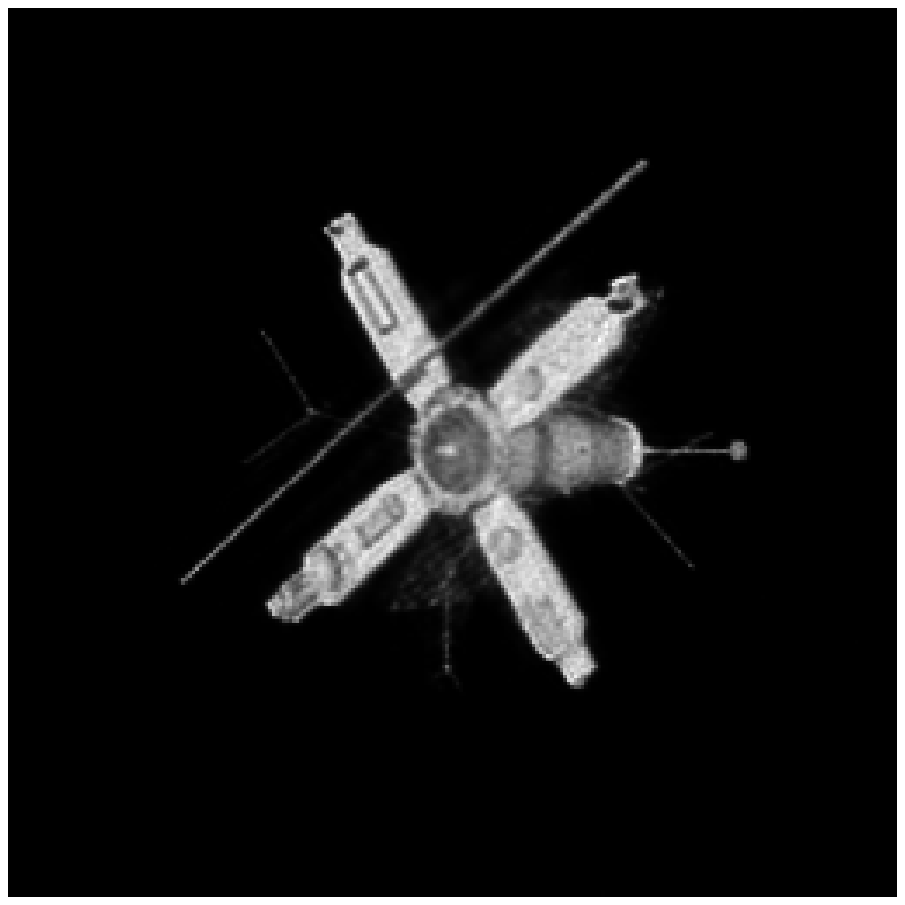}\\
\end{tabular}%
\caption{Images related to the \emph{\texttt{ satellite}} test problem, with both Gaussian and Poisson noise of level around $\weps=1.5\cdot 10^{-2}$. Relative errors are reported within parentheses. \textbf{(a)} Exact image. \textbf{(b)} Blurred and noisy image. Restorations obtained at the ${100}$th iteration of the: \textbf{(c)} KWMRNSD method ${(0.1844)}$, and \textbf{(d)} CP-NN-FCGLS($k$) method ${(0.1563)}$.}
\label{fig:Img_SatPoiss}
\end{figure}
\cref{fig:Img_Sat} displays the restored images of best quality obtained by different methods. One can see MFISTA to deliver slightly better results than NN-FCGLS for this particular example. However, looking at the displayed images, the differences are not huge, and NN-FCGLS is much faster.
The remaining tests are concerned with the deblurring and denoising of the \texttt{satellite} image, perturbed by both Gaussian and Poisson noise. The Gaussian standard deviation parameter is $\sigma = 20$, while the Poisson parameter is $\beta = 60$.  \cref{tab:SatPoiss} compares different methods for solving the nonnegatively constrained covariance-preconditioned problem (\ref{covNNLS}). In particular, the WMRNSD and KWMRNSD methods \cite{BN06} are compared with the new CP-NN-FCGLS method. Regarding the latter, the notation CP-NN-FCGLS($k$) is used to emphasize that the restart-dependent covariance matrix $\covM^{(k)}$ is used in Algorithm \ref{alg:CPNN-FCGLS}.
\cref{fig:RelErr_SatPoiss} displays the history of the relative errors, while \cref{fig:Img_SatPoiss} displays the restorations obtained at the 100th iteration of the KWMRNSD and CP-NN-FCGLS($k$) methods.  
\begin{figure}[htbp]
\centering
\begin{tabular}{c}
\includegraphics[width=14.0cm]{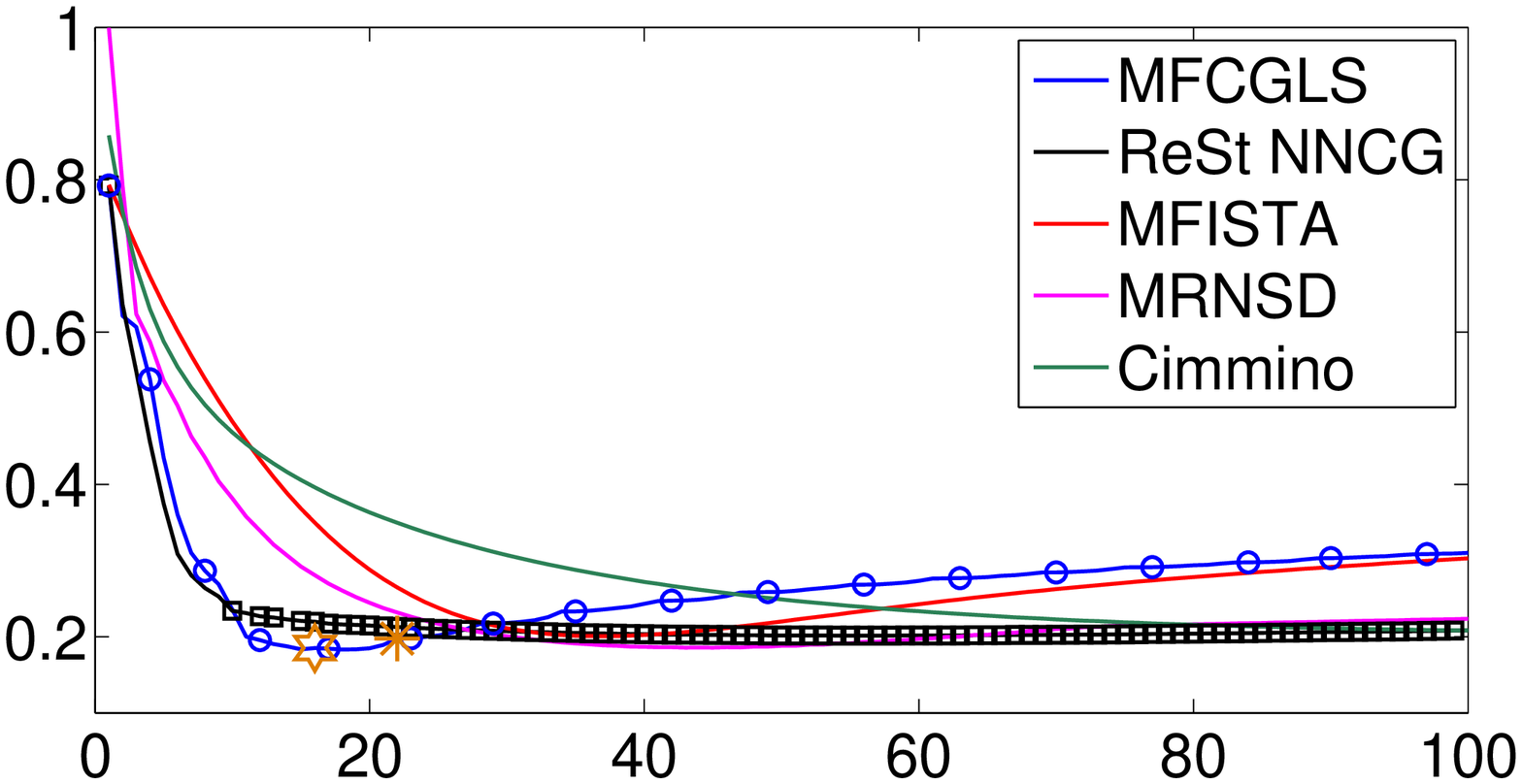}\vspace{-0.2cm}\\
\includegraphics[width=14.0cm]{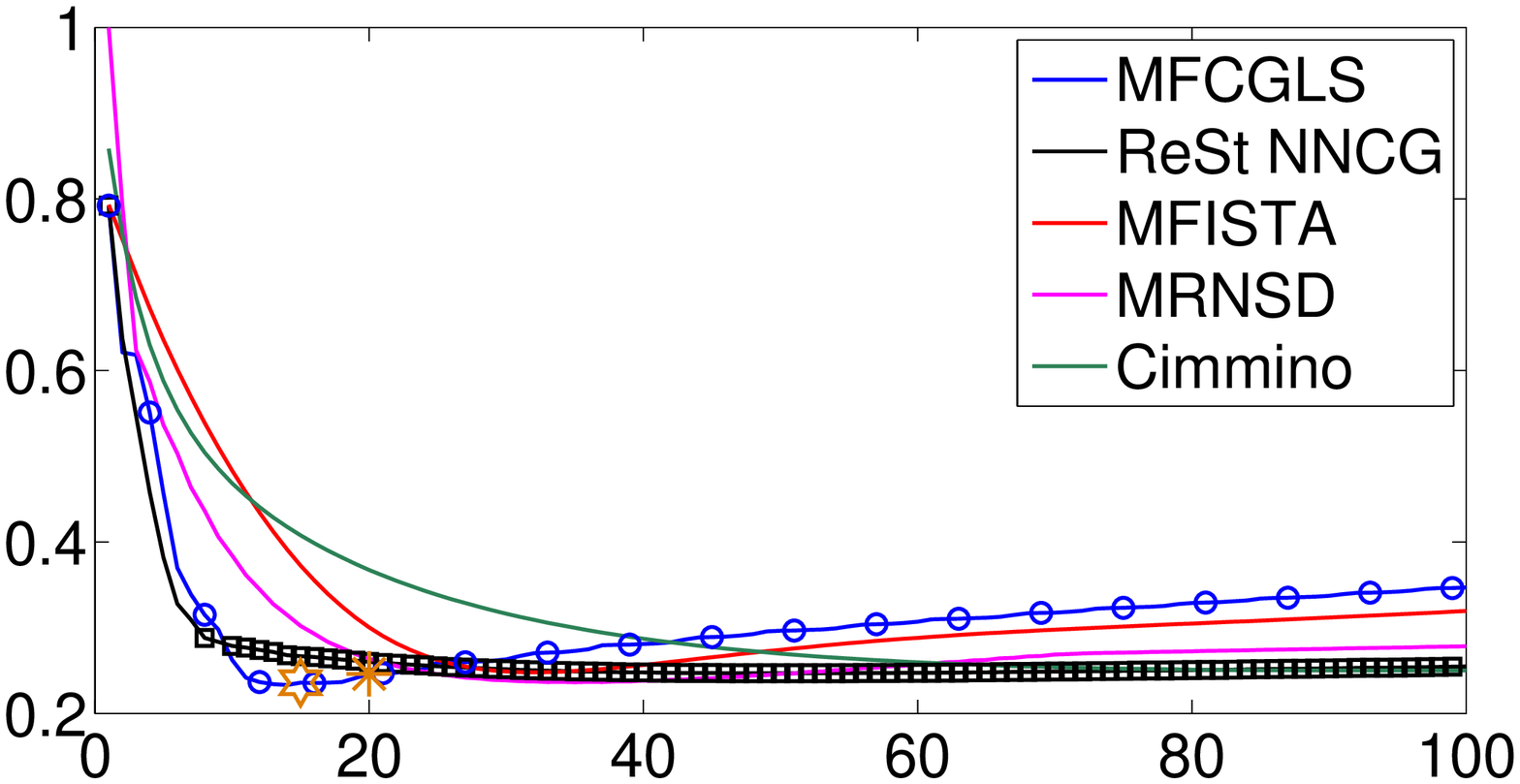}
\end{tabular}
\caption{\emph{\texttt{paralleltomo}} test problem, with $\weps = 5\cdot 10^{-2}$. Upper frame: comparisons of the relative errors obtained when solving the overdetermined problem, with coefficient matrix of size $81088\times 65536$. Lower frame: comparisons of the relative errors obtained when solving the underdetermined problem, with coefficient matrix of size $32580\times 65536$. For the NN-FCGLS and ReSt NNCG methods, a small marker is used to emphasize the iterations where restarts happen. Two big markers highlight the iterations satisfying the first stopping criterion in (\ref{stopC}) (asterisk) and the second stopping criterion in (\ref{stopC}) (hexagram) for NN-FCGLS.}
\label{fig:RelErr_Tomo}
\end{figure}
\begin{figure}[htbp]
\centering
\begin{tabular}{ccc}
\hspace{-0.9cm}{\small \textbf{(a)}} & \hspace{-0.9cm}{\small \textbf{(b)}} & \hspace{-0.9cm}{\small \textbf{(c)}} \\ 
\hspace{-0.9cm}\includegraphics[width=5.8cm]{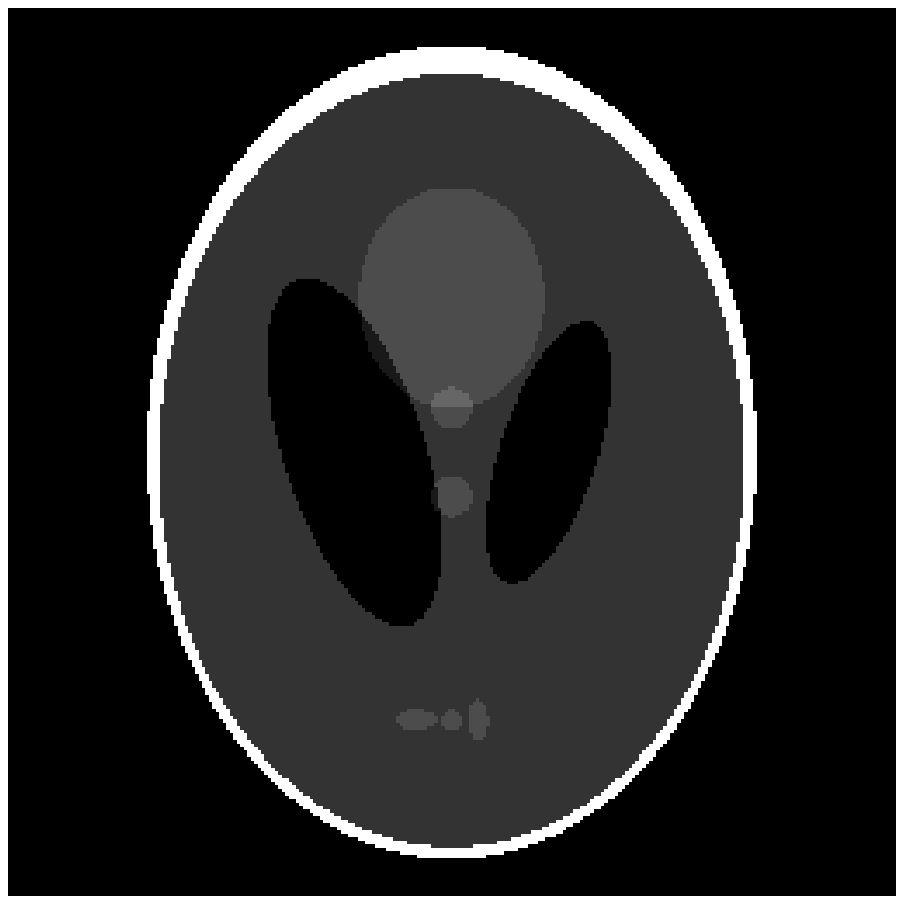} & 
\hspace{-0.9cm}\includegraphics[width=5.8cm]{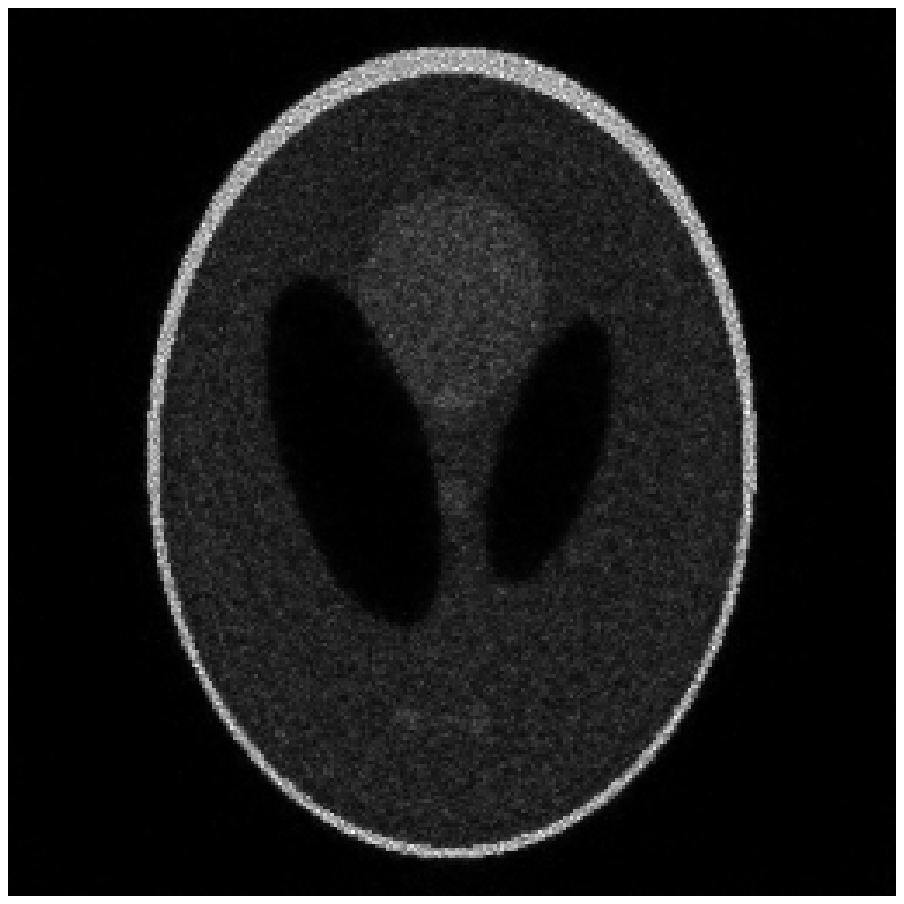} & 
\hspace{-0.9cm}\includegraphics[width=5.8cm]{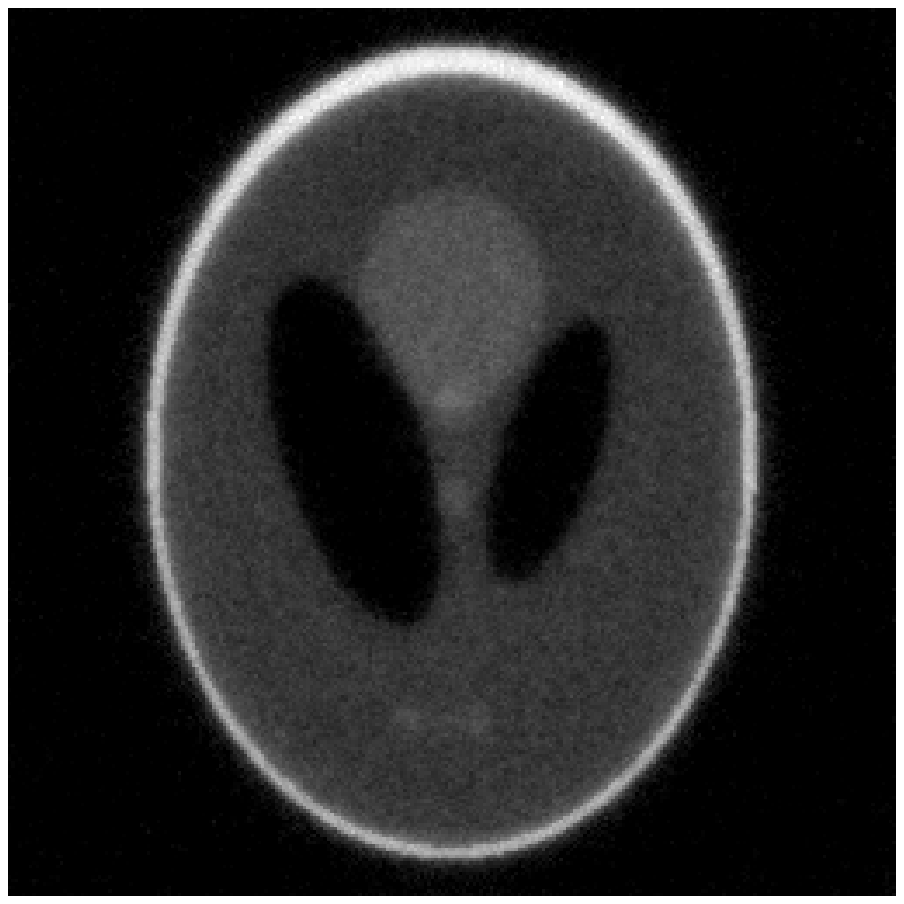}
\end{tabular}%
\caption{Images related to the underdetermined \emph{\texttt{paralleltomo}} test problem, with $\weps = 5\cdot 10^{-2}$. Relative errors are reported within parentheses. \textbf{(a)} Exact image. Restorations obtained at the ${15}$th iteration of the: \textbf{(b)} NN-FCGLS method ${(0.2358)}$, and \textbf{(c)} MFISTA method ${(0.3726)}$.}
\label{fig:Img_Tomo}
\end{figure}
\begin{table}
\footnotesize
\centering
\caption{\emph{\texttt{paralleltomo}} test problem, with $\widetilde{\eps}=5\cdot10^{-2}$. Average values over $10$ runs of the test problem, with different noise realizations.}
\label{tab:Tomo}
\begin{tabular}{|lcccc|}
\hline
 & {\textbf{rel.error}} & {\textbf{iterations}} & {\textbf{tot.time}} & {\textbf{av.time}}\\\hline
 \multicolumn{5}{|c|}{\textbf{overdetermined} (size $81088\times 65536$)}\\\hline
 {\textbf{NN-FCGLS}} & 1.8268e-01 & 17.33 & 0.92 & 0.09\\ \hline
 {\textbf{ReSt NNCG}} & 2.0133e-01 & 56.00 & 16.31 & 0.11\\ \hline
  {\textbf{MFISTA}} & 2.0029e-01 & 37.00 & 53.13 & 1.44 \\ \hline
  {\textbf{MRNSD}} & 1.8506e-01 & 45.00 & 4.10 & 0.09\\ \hline
  {\textbf{Cimmino}} & 1.9982e-01 & 100.00 & 33.47 & 0.33\\ \hline
 \multicolumn{5}{|c|}{\textbf{underdetermined} (size ${32580\times 65536}$)}\\\hline
{\textbf{NN-FCGLS}} & 2.3145e-01 & 13.00 & 0.15 & 0.07\\ \hline
{\textbf{ReSt NNCG}} & 2.4572e-01 & 51.00 & 0.59 & 0.05\\ \hline
{\textbf{MFISTA}} & 2.4634e-01 & 32.00 & 12.41 & 0.39\\ \hline
{\textbf{MRNSD}} & 2.3485e-01 & 35.00 & 3.43 & 0.09\\ \hline
{\textbf{Cimmino}} & 2.4715e-01 & 94.33 & 8.84 & 0.09\\ \hline
\end{tabular}
\end{table}
Looking at the results for the Gaussian and Poisson noise case, one can see that the new methods are inherently faster than the MRNSD-like methods. However, while KWMRNSD has a better performance than WMRNSD for this test problem, CP-NN-FCGLS behaves very similarly to CP-NN-FCGLS($k$).

\textbf{Example 3.}
The last set of experiments uses the \texttt{paralleltomo} test problem \cite{AIRT}. By varying the acquisition parameters, two sparse sensing matrices $A$ are obtained: one of size ${81088\times 65536}$ (overdetermined case), and one of size ${32580\times 65536}$ (underdetermined case). Gaussian noise of level $\weps=5\cdot 10^{-2}$ is added. The parameter $\mmaxin$ of Algorithm \ref{alg:CPNN-FCGLS} is set to 10, the untruncated version of FCGLS is considered, and the parameter $\tau$ appearing in (\ref{stopC}) is set to $10^{-2}$. \cref{tab:Tomo} reports the average results obtained running 10 times the overdetermined and underdetermined problems, with different noise realizations. In addition to the methods considered in the previous examples, also the nonnegative Cimmino method as implemented in \cite{AIRT} is tested. All the methods are stopped after 100 (total) iterations, and a zero initial guess is used (so that, for NN-FCGLS, the identity matrix of order $N$ is taken as $X^{(0)}$).
\cref{fig:RelErr_Tomo} displays the history of the relative errors for the overdetermined and underdetermined cases. Quite interestingly, one can see the ReSt NNCG method to perform slightly better than the NN-FCGLS method during the first iterations. Indeed, when performing the ReSt NNCG method, nonnegativity is imposed only at each restart, and no preconditioning is considered. For this reason, ReSt CGNN is initially faster than NN-FCGLS, but then it rapidly slows down. The NN-FCGLS, MFISTA and MRNSD methods are clearly  \cref{fig:Img_Tomo} shows the reconstructions obtained at the 15th iteration of the NN-FCGLS and MFISTA method applied to the underdetermined problem. 
For this test problem, NN-FCGLS is extremely efficient. Indeed, in both the overdetermined and the underdetermined cases, NN-FCGLS can deliver the best reconstructions in the least number of iterations.

\section{Final remarks and future work}\label{sect:final}
An original, efficient and promising strategy was presented to solve nonnegative linear least squares problems. The new approach is called NN-FCGLS, and it exploits flexible Krylov subspaces methods. To the best of our knowledge, NN-FCGLS is the first systematic attempt to enforce nonnegative approximations within the framework of Krylov subspace methods. The extensive numerical tests displayed in the paper show that the new method is very competitive with other state-of-the-art approaches. Indeed, NN-FCGLS provides faster and better reconstructions with respect to many methods already available in the literature. Moreover, it can be used to solve problems that are affected by both Gaussian and Poisson noise. 

{Although a preliminary theoretical analysis of NN-FCGLS is already provided in this paper}, future work should concentrate on a {deeper theoretical understanding of the regularizing properties of iterative solvers based on flexible Krylov subspaces.} 
Handling additional constraints (e.g., box constraints, sparsity), {considering Krylov methods other than CGLS, }and developing parallel implementations, are interesting and challenging extensions of the present method, which could significantly impact large-scale imaging applications, in particular radio-interferometric imaging in astronomy and magnetic resonance imaging in medicine. 

\section*{Acknowledgements}
We wish to thank the anonymous Referees for providing many insightful remarks that considerably helped us to improve the paper. Silvia Gazzola is very grateful to James Nagy, for engaging in many stimulating discussions about nonnegatively constrained least squares problems over the last years.

\bibliographystyle{plain}
\bibliography{NN}

\end{document}